\definecolor{darkgreen}{rgb}{0,0.7,0}
\definecolor{darkblue}{rgb}{0,0,0.7}
\renewcommand{\div}{\text{div}}
\newcommand{\argmin}{\mbox{argmin}}
\newcommand{\R}{\mathbb{R}}
\newcommand{\M}{\mathcal{M}}
\newcommand{\p}{\mathcal{P}}
\newcommand{\ptwo}{\mathcal{P}_2}
\newcommand{\dkl}{D_{\mbox {\tiny{\rm KL}}}}
\newcommand{\dchi}{D_{\mbox {\tiny{\rm $\chi^2$}}}}
\newcommand{\jkl}{J_{\mbox {\tiny{\rm KL}}}}
\newcommand{\jchi}{J_{\mbox {\tiny{\rm $\chi^2$}}}}
\newcommand{\fchi}{F_{\mbox {\tiny{\rm $\chi^2$}}}}
\newcommand{\fkl}{F_{\mbox {\tiny{\rm KL}}}}
\newcommand{\W}{\mathcal{W}}
\newcommand{\envelope}{(\raisebox{-.5pt}{\scalebox{1.45}{\Letter}}\kern-1.7pt)}
\newcommand{\veps}{\varepsilon}
\definecolor{mygreen}{rgb}{0.1,0.75,0.2}
\newcommand{\nc}{\normalcolor}
\newcommand{\pX}[1]{\frac{\partial}{\partial x_{#1}}}
            \newtheorem{thm}{Theorem}[section]
          \newtheorem{prop}[thm]{Proposition}
          \newtheorem{remark}[thm]{Remark}
          \newtheorem{defn}[thm]{Definition}
          \newtheorem{example}{Example}
          \newtheorem{rem}[thm]{Remark}
\numberwithin{equation}{section}
\theoremstyle{plain}
\begin{document}

\begin{frontmatter}
\title{The Bayesian update: variational formulations and gradient flows\thanksref{T1}}
\runtitle{The Bayesian update: variational formulations and gradient flows}

\begin{aug}
\author{\fnms{Nicolas} \snm{Garcia Trillos}\thanksref{addr1}\ead[label=e1]{garciatrillo@wisc.edu}},
\and
\author{\fnms{Daniel} \snm{Sanz-Alonso}\thanksref{addr2}\ead[label=e2]{sanzalonso@uchicago.edu}}

\runauthor{}

\address[addr1]{Department of Statistics, University of Wisconsin Madison 
    \printead{e1} 
}

\address[addr2]{Department of Statistics, University of Chicago
    \printead{e2}
}

\end{aug}

\begin{abstract}
          The Bayesian update can be viewed as a variational problem by characterizing the posterior as the minimizer of a functional. The variational viewpoint is far from new and is at the heart of popular methods for posterior approximation. However, some  of its consequences seem largely unexplored. We focus on the following one: defining the posterior as the minimizer of a functional gives a natural path towards the posterior  by moving in the direction of steepest descent of the functional. This idea is made precise through the theory of gradient flows, allowing to bring new tools to the study of Bayesian models and algorithms. Since the posterior may be characterized as the minimizer of different functionals, several variational formulations may be considered. We study three of them and their three associated gradient flows. We show that, in all cases, the rate of convergence of the flows to the posterior can be bounded by the geodesic convexity of the functional to be minimized. Each gradient flow naturally suggests a nonlinear diffusion with the posterior as invariant distribution. These diffusions may be discretized to build proposals for Markov chain Monte Carlo (MCMC) algorithms. By construction, the diffusions are guaranteed to satisfy a certain optimality condition, and rates of convergence are given by the convexity of the functionals. We use this observation to propose a criterion for the choice of metric in Riemannian MCMC methods.
\end{abstract}

\begin{keyword}[class=MSC]
\kwd[62C10]{}
\kwd{62F15}
\kwd{49N99}
\end{keyword}

\begin{keyword}
\kwd{Gradient flows, Wasserstein Space, Convexity, Riemannian MCMC}
\end{keyword}

\end{frontmatter}

   \section{Introduction}\label{intro}
 In this paper we revisit the old idea of viewing the posterior as the minimizer of an energy functional.  The use of variational formulations of Bayes rule seems to have been largely focused on one of its methodological benefits: restricting the minimization to a subclass of measures is the backbone of variational Bayes methods for posterior approximation. Our aim is to bring attention to two other theoretical and methodological benefits, and to study in some detail one of these: namely, that each variational formulation suggests a natural path, defined by a gradient flow, towards the posterior. We use this observation to propose a criterion for the choice of metric in Riemannian MCMC methods. 

Let us recall informally a variational formulation of Bayes rule. Given a prior $p(u)$ on an unknown parameter $u$ and a likelihood function $L(y|u),$ the posterior $p(u|y) \propto L(y|u) p(u)$ can be characterized \cite{duke} as the distribution $q^*(u)$ that minimizes 
\begin{equation}\label{jklformulaini}
\jkl\bigl(q(u)\bigr) = \dkl \bigl(q(u) \| p(u)\bigr) - \int \log L(y|u) q(u) du.
\end{equation}
Indeed, minimizing $\jkl\bigl( q(u) \bigr)$ is equivalent to minimizing the Kullback-Leibler divergence $\dkl(q(u)\| p(u|y)),$ and so clearly the minimizer is $q^*(u) = p(u|y).$
Other variational formulations may be considered by minimizing, for instance, other divergences rather than Kullback-Leibler. In this paper we consider three variational formulations of the Bayesian update. The first two characterize the posterior \emph{measure}  as the minimizer of functionals $\jkl$ or $\jchi$ constructed by penalizing deviations from the prior measure in Kullback-Leibler or $\chi^2$ divergence ---definitions of these functionals and divergences are given in equations \eqref{functionalkl}, \eqref{functionalchi}, \eqref{kldefinition}, \eqref{chidefinition}. The third one characterizes the posterior \emph{density} as the minimizer of the Dirichlet energy $D^\mu$ ---see \eqref{dirichletenergy}. 

Why is it useful to view the posterior $p(u|y)$ as the minimizer of an energy? We list below three advantages of this viewpoint, the third of which will be the focus of our paper.
\begin{enumerate}
\item The variational formulation provides a natural way to approximate the posterior by restricting the minimization problem to distributions $q(u)$ satisfying some computationally desirable property. 
For instance, variational Bayes methods often restrict the minimization to $q(u)$ with product structure \cite{attias1999variational}, \cite{wainwright2008graphical}, \cite{fox2012tutorial}.  A similar idea is studied in \cite{pinski2015kullback}, where $q(u)$ is restricted to a class of Gaussian distributions. An iterative variational procedure that progressively improves the posterior approximation by enriching the family of distributions was introduced in \cite{guo2016boosting}.
\item If the prior $p_{\varepsilon_n}(u)$ or the likelihood $L_{\varepsilon_n}(y|u)$ depend on a parameter $\epsilon_n,$ then the variational formulation allows to show large $n$ convergence of posteriors $p_{\varepsilon_n}(u|y)$ by establishing the $\Gamma$-convergence of the associated energies. This method of proof has been employed by the authors in \cite{trillos2017continuum}, \cite{trillos2017consistency} to analyze the large-data consistency of graph-based Bayesian semi-supervised learning. 
\item Each variational formulation gives a natural path, defined by a gradient flow, towards the posterior. These flows can be thought of as time-parameterized curves in the space of probability measures, converging in the large-time limit towards the posterior. 
\end{enumerate}

In this paper we study three gradient flows associated with the variational formulations defined by minimization of the functionals $\jkl$, $\jchi$, and $D^\mu.$ For intuition, we recall that a gradient flow in Euclidean space defines a curve whose tangent always points in the direction of steepest descent of a given function --see equation \eqref{ODE}. In the same fashion, a gradient flow in a more general metric space can be thought of as a curve on said space that always points in the direction of steepest descent of a given functional \cite{ambrosio2008gradient}. In Euclidean space the direction of steepest descent is naturally defined as that in which an \emph{Euclidean} infinitesimal increment leads to the largest decrease on the value of the function. In a general metric space the direction of steepest descent is the one in which an infinitesimal increment \emph{defined in terms of the distance}  leads to the largest decrease on the value of the functional.
In this paper we study: 
\begin{enumerate}
\item[(i)]  The gradient flows defined by $\jkl$ and $\jchi$ in the space of probability measures with finite second moments endowed with the Wasserstein distance---definitions are given in \eqref{defwasserstein}. By construction, these flows give curves of probability measures that evolve following the direction of steepest descent of $\jkl$ and $\jchi$ in Wasserstein distance, converging to the posterior measure in the large-time limit.
\item[(ii)] The gradient flow defined by the Dirichlet energy $D^\mu$ in the space of square integrable densities endowed with the $L^2$ distance. By construction, this flow gives a curve of densities in $L^2$ that evolves following the direction of steepest descent of $D^\mu$ in $L^2$ distance, converging to the posterior density in the large-time limit. Interestingly, the curve of measures associated with these densities is the exact same as the curve defined by the $\jkl$ flow on Wasserstein space \cite{jordan1998variational}.
\end{enumerate} 

A question arises:  what is the rate of convergence of these flows to the posterior?  The answer is, to a large extent, provided by the theory of optimal transport and gradient flows \cite{ambrosio2008gradient}, \cite{villani2003topics}, \cite{villani2008optimal}, \cite{santambrogio2015optimal}. We will review and provide a unified account of these results in the main body of the paper, section \ref{sec:section3mainbody}. In the remainder of this introduction we discuss how rates of convergence may be studied in terms of convexity of functionals, and how these rates may be used as a guide for the choice of proposals for MCMC methods.

Rates of convergence of the flows hinge on the convexity level of each of the functionals $\jkl,$ $\jchi,$ and $D^\mu.$ Recalling the Euclidean case may be helpful: gradient descent on a highly convex function will lead to fast convergence to the minimizer. What is, however, a sensible notion of convexity for  functionals defined over measures or densities? Our presentation highlights that the notion of geodesic (or displacement) convexity \cite{mccann1997convexity} nicely unifies the theory: it guarantees the existence and uniqueness of the three gradient flows and it also provides a bound on their rate of convergence to the posterior. In the $L^2$ setting  one can  show that positive geodesic convexity is equivalent to the posterior satisfying a Poincar\'e inequality, and also to the existence of a spectral gap ---see subsection \ref{ssec:geodesicdmu}. On the other hand, the geodesic convexity of $\jkl$ and $\jchi$ in Wasserstein space is determined by the Ricci curvature of the manifold, as well as by the likelihood function and prior density  ---see \eqref{functionalkl}, \eqref{fkldefinition}, \eqref{functionalchi}, \eqref{fchi}. Typically the three functionals $\jkl,$ $\jchi,$ and $D^\mu$  will have different levels of geodesic convexity, and establishing a sharp bound on each of them may not be equally tractable.

The theory of gradient flows and optimal transport gives, for each of the flows, an associated Fokker-Planck partial differential equation (PDE) that governs the evolution of densities \cite{japoneses2011displacement}, \cite{santambrogio2015optimal}. Such PDEs are typically costly to discretize if the parameter space is of moderate or high dimension, but they may be used in small-dimensional problems as a way to define tempering schemes. Here we do not explore this idea any further. Instead, we focus on the (nonlinear) diffusion processes associated with the PDEs. These diffusions are Langevin-type stochastic differential equations, whose evolving densities satisfy the Fokker-Planck equations. By construction, the invariant distribution of each of these diffusions is the sought posterior, and a bound on the rate of convergence of the diffusions to the posterior is given by the geodesic convexity of the corresponding functional. The gradient flow perspective automatically gives a sense in which the diffusions are optimal: the associated densities move locally (in Wasserstein or $L^2$ sense) in the direction of steepest descent of the functional. 
From this it immediately follows, for instance, that  the law of a standard Langevin diffusion in Euclidean space evolves locally in Wasserstein space in the direction that minimizes Kullback-Leibler, and that it also evolves locally in $L^2$ in the direction that minimizes the Dirichlet energy.

The MCMC methodology allows to use a proposal based on a discretization of a diffusion ---combined with an accept-reject mechanism to remove the discretization bias--- to produce, in the large-time asymptotic, correlated posterior samples. Heuristically, the rate of convergence of the un-discretized diffusion may guide the choice of proposal. Proposals based on Langevin diffusions  were first suggested in \cite{besag1994comments}, and the exponential ergodicity of the resulting algorithms was analysed in \cite{roberts1996exponential}. The paper \cite{girolami2011riemann} considered changing the metric on the parameter space in order to accelerate MCMC algorithms by taking into account the geometric structure that the posterior  defines in the parameter space. This led to a new family of Riemannian MCMC algorithms. Our paper is concerned with the study of un-discretized diffusions; the effect of the accept-reject mechanism on rates and ergodicity of MCMC methods will be studied elsewhere.  We suggest that a way to guide the choice of metric of Riemannian MCMC methods is to choose the one that leads to a faster rate of convergence of the diffusion under certain constraints. We emphasize that despite working with un-discretized diffusions, our guidance for choice of proposals accounts for the fact that discretization will eventually be needed. Our criterion weeds out choices of metric that lead to diffusions that achieve fast rate of convergence by merely speeding-up the drift. This is crucial, since a larger drift typically leads to a larger discretization error, and therefore to more rejections in the MCMC accept-reject mechanism in order to remove the bias in the discrete chain. This important constraint on the size of the drift seems to have been overlooked in existing continuous-time analyses of MCMC methods.

In summary, the following points highlight the key elements and common structure of the variational formulations of the Bayesian update and of the study of the associated gradient flows:
\begin{itemize}
	\item The posterior can be characterized as the minimizer of different functionals on probability measures or densities. 
	\item One can then study the gradient flows of these functionals with respect to a metric on the space of probability measures or densities; the resulting curve is a curve of maximal slope and its endpoint is the posterior.
	\item The gradient flows are characterized by a Fokker-Planck PDE that governs the evolution of the density of an associated diffusion process. 
	\item By studying the convexity of the functionals (with respect to a given metric) one can obtain rates of convergence of the gradient flows towards the posterior. In particular, the level of convexity determines the speed of convergence of the densities of the associated diffusion process towards the posterior, and hence can be used as a criterion to guide the choice of proposals for MCMC methods;  here we emphasize that care must be taken when comparing different diffusions if a higher speed of convergence is at the cost of a more expensive discretization. 
\end{itemize}

The ideas in this paper immediately extend beyond the Bayesian interpretation stressed here to any application (e.g. the study of conditioned diffusions) where a measure of interest is defined in terms of a reference measure and a change of measure. Also, we consider only Kullback-Leibler and $\chi^2$ prior penalizations to define the functionals $\jkl$ and $\jchi$, but it would be possible to extend the analysis to the family of $m$-divergences introduced in \cite{japoneses2011displacement}. Kullback-Leibler and $\chi^2$ prior penalization correspond to  $m\to1$ and $m=2$ within this family.  
In what follows we point out some of the features of the different functionals and gradient flows that we consider in this paper.

\nc
\subsection{Comparison of Functionals and Flows}\label{comparison} We now provide a comparison of the three choices of functionals that we consider. 

\begin{enumerate}
\item The two gradient flows in Wasserstein space (arising from the functionals $\jkl$ and $\jchi$) are fundamentally connected with the variational formulation: these variational formulations can be used to \emph{define}  posterior-type measures via a penalization of deviations from the prior and deviations from the data in situations where establishing the existence of conditional distributions by disintegration of measures is technically demanding. On the other hand, the variational formulation for the Dirichlet energy is less natural and requires previous knowledge of the posterior.
\item The precise level of geodesic convexity of the functionals $\jkl$ (and $\jchi$) can be computed from point evaluation of the Ricci tensor (of the parameter space) and derivatives of the densities. In particular, knowledge of the underlying metric suffices to compute these quantities. In contrast, establishing a sharp Poincar\'e inequality ---the level of geodesic convexity of the Dirichlet energy in $L^2(\M,\mu)$--- is in practice unfeasible, as it effectively requires solving an infinite dimensional optimization problem. It is for this reason ---and because of the explicit dependence of the convexity in Wasserstein space with the geometry induced by the manifold metric tensor--- that our analysis of the choice of metric in Riemannian MCMC methods is based on the $\jkl$ functional (see section \ref{sec:RMCMC}, and in particular Theorem \ref{theoremrmcmc}). 
\item On the flip side of point 2,  a Poincar\'e inequality for the posterior with a not necessarily optimal constant can be established using only tail information. In particular, even when the functional $\jkl$ is not geodesically convex in Wasserstein space, one may still be able to obtain a Poincar\'e inequality (see subsection \ref{sssec:glandau} for an example).
\item In contrast to the diffusions arising from the $\jkl$ or Dirichlet flows, the stochastic processes arising from the $\jchi$ formulation are inhomogeneous, and hence  simulation seems more challenging unless further structure is assumed on the prior measure and likelihood function. Also, the evolution of densities of the gradient flow of $\jchi$ in Wasserstein space is given by a porous medium PDE. 

\end{enumerate}


\subsection{Outline}
The rest of the paper is organized as follows. Section \ref{sec:preliminaries} contains some background material on the Wasserstein space, geodesic convexity of functionals, and gradient flows in metric spaces.  The core of the paper is section \ref{sec:section3mainbody}, where we study the geodesic convexity, PDEs, and diffusions associated with each of the three functionals $\jkl,$ $\jchi$, and $D^\mu.$
In section \ref{sec:RMCMC} we consider an application of the theory to the choice of metric in Riemannian MCMC methods \cite{girolami2011riemann}, and in section \ref{sec:examplesgeodesic} we illustrate the main concepts and ideas through examples arising in Bayesian formulations of semi-supervised learning \cite{trillos2017continuum}, \cite{trillos2017consistency}, \cite{bertozziuncertainty}. We close in section \ref{sec:conclusions} by summarizing our main contributions and pointing to open directions.

\subsection{Set-up and Notation} 
\label{sec:notation}
 $(\M,g)$ will denote a smooth connected $m$-dimensional Riemannian manifold with metric tensor $g$ representing the parameter space. We will denote by $d$ the associated Riemannian distance, and assume that $(\M,d)$ is a complete metric space. By the Hopf-Rinow theorem it follows that $\M$ is a \emph{ geodesic space} ---we refer to subsection \eqref{sec:geodesicspaces} for a discussion on geodesic spaces and their relevance here. We denote by $vol_g$ the associated volume form. To emphasize the dependence of differential operators on the metric with which $\M$ is endowed, we write $\nabla_g$, $\div_g,$ $\text{Hess}_g$ and $\Delta_g$ for the gradient, divergence, Hessian, and Laplace Beltrami operators on $(\M,g).$ The reader not versed in Riemannian geometry may focus on the case $\M=\R^m$ with the usual metric tensor,  in which case $d$ is the Euclidean distance and $dvol_g = dx$ is the Lebesgue measure. However, in section \ref{sec:RMCMC} where we discuss applications to Riemannian MCMC, we endow $\R^m$ with a general metric tensor $g$ and hence familiarity with some notions from differential geometry is desirable.
 
We denote by $\p(\M)$ the space of probability measures on $\M$ (endowed with the Borel $\sigma$-algebra). We will be concerned with the update of a \emph{prior} probability measure $\pi \in \p(\M)$ ---that represents various degrees of belief on the value of a quantity or parameter of interest--- into a \emph{posterior} probability measure $\mu \in \p(\M)$, based on observed data $y$.
We will assume that the prior is defined as a change of measure from $vol_g,$ and that the posterior is defined as a change of measure from $\pi$ as follows:
\begin{equation}\label{priorposteriordefinition}
\pi = e^{-\Psi} vol_g, \quad \quad \mu \propto e^{-\phi}\pi.
\end{equation}
The data is incorporated in the Bayesian update through the negative log-likelihood function $\phi(\cdot) = \phi(\cdot;y).$  

\section{Preliminaries}

In this section we provide some background material. The Wasserstein space, and the notion of $\lambda$-geodesic convexity of functionals are reviewed in subsection \ref{sec:geodesicspaces}. Gradient flows in metric spaces are reviewed in subsection \ref{sec:gradflows}.
\label{sec:preliminaries}
\subsection{Geodesic Spaces and Geodesic Convexity of Functionals}
\label{sec:geodesicspaces}
A geodesic space $(X,d_X)$ is a metric space with a notion of length of curves that is compatible with the metric, and where every two points in the space can be connected by a curve whose length achieves the distance between the points (see \cite{burago} for more details). Geodesic spaces constitute a large family of metric spaces with a rich theory of gradient flows.   Here we consider three geodesic spaces. First, the \emph{base space} $(\M,d)$, i.e. the manifold $\M$ equipped with its Riemannian distance. Second, the space $\ptwo(\M)$ of square integrable Borel probability measures defined on $\M$, endowed with the Wasserstein distance $\W_2$. Third, the space of functions $f\in L^2(\M,\mu)$, with $\int_\M f d\mu =1,$ equipped with the $L^2(\M,\mu)$ norm. 

We spell out the definitions of $\ptwo(\M)$ and $\W_2$:
\begin{align}\label{defwasserstein}
\begin{split}
\ptwo(\M)&:= \Bigl\{ \nu \in \p(\M) \: : \:   \int_{\M}d^2(x, x_0) d \nu(x) < \infty, \,\, \text{ for some } x_0 \in \M \Bigr\}, \\
\W_2^2(\nu_1,\nu_2) &:= \inf_\alpha \int_{\M \times \M} d(x,y)^2 d\alpha(x,y), \quad \nu_1, \nu_2 \in \ptwo(\M).
\end{split}
\end{align}
The infimum in the previous display is taken over all \textit{transportation plans} between $\nu_1$ and $\nu_2$, i.e. over $\alpha\in \p(\M\times \M)$ with marginals $\nu_1$ and $\nu_2$ on the first and second factors. The space $(\ptwo(\M),\W_2 )$ is indeed a geodesic space: geodesics in $(\ptwo(\M), \W_2)$ are induced by those in $(\M,d)$. All it takes to construct a geodesic connecting $\nu_0\in \ptwo(\M)$ and $\nu_1 \in \ptwo(\M)$ is to find an optimal transport plan between $\nu_0$ and $\nu_1$ to determine source locations and target locations, and then transport the mass along geodesics in $\M$ (see \cite{villani2003topics} and \cite{santambrogio2015optimal}). 

The space of functions $f\in L^2(\M,\mu)$, with $\int_\M f d\mu =1,$ equipped with the $L^2(\M,\mu)$ norm is also a geodesic space, where a constant speed geodesic connecting $f_0$ and $f_1$ is given by linear interpolation: $t \in [0,1] \mapsto (1-t)f_0 + t f_1$.

We will consider several functionals $E:X\to \R\cup\{\infty\}$ throughout the paper. They will all be defined in one of our three geodesic spaces ---that is, $X=\M$, $X=\ptwo(\M)$ or $X=L^2(\M,\mu)$. Important examples will be, respectively:
\begin{enumerate}
\item  Functions $\Psi:\M\to  \R\cup\{\infty\}.$
\item The Kullback-Leibler and $\chi^2$ divergences $\dkl(\cdot\| \pi),$  $\dchi(\cdot\|\pi):\p(\M) \to [0,\infty],$ where $\pi$ is a given (prior) measure and,  for $\nu_1, \nu_2 \in \p(\M),$ 
\begin{equation}\label{kldefinition}
 \dkl(\nu_1 \| \nu_2) := \begin{cases}  \int_{\M}  \frac{d \nu_1}{d \nu_2}(u) \log\left( \frac{d\nu_1 }{d \nu_2}(u) \right)d \nu_2(u),  &  \nu_1 \ll \nu_2,  \\ \infty, & \text{otherwise},   \end{cases}  
 \end{equation}
\begin{equation}\label{chidefinition}
\dchi(\nu_1 \| \nu_2) := \begin{cases}  \int_{\M} \Bigl( \frac{d \nu_1}{d \nu_2}(u) -1\Bigr)^2 d \nu_2(u),  &  \nu_1 \ll \nu_2,  \\ \infty, & \text{otherwise}; \end{cases} 
\end{equation}  
and the potential-type functional $J:\p(\M) \to \R\cup\{\infty\}$ 
given by 
$$J(\nu) := \int_\M h(u) d\nu(u),$$
where $h$ is a given potential function.
\item 
 The \textit{Dirichlet} energy $D^\mu : L^2(\M, \mu) \rightarrow [0, \infty]$ defined by
\begin{equation}\label{dirichletenergy}
 D^{\mu}(f) = \begin{cases}   \int_{\M} \| \nabla_g f(u) \|^2 d \mu(u), & f \in L^2(\M, \mu)\cap H^1(\M),   \\ +\infty, & \text{otherwise}.      \end{cases},
 \end{equation}
Recall that here and throughout, $\nabla_g$ denotes the gradient in $(\M,g)$ and $\lVert \cdot \rVert$ is the norm on each tangent space $\mathcal{T}_x \M$.
\end{enumerate}

A crucial unifying concept will be that of $\lambda$-geodesic convexity of functionals. We recall it here:
\begin{defn}
Let $(X,d_X)$ be a geodesic space and let $\lambda \in \R$. A functional $E:X \to \R \cup \{\infty\}$ is called $\lambda$-geodesically convex provided that  for any $x_0, x_1 \in X$ there exists a constant speed geodesic $t \in [0,1] \mapsto \gamma(t) \in X$ such that $\gamma(0) = x_0,$ $\gamma(1) = x_1,$ and 
\begin{equation*}
E\bigl(\gamma(t)\bigr) \le (1-t) E(x_0) + tE(x_1) - \lambda\frac{t(1-t)}{2} d_X^2(x_0,x_1), \quad \forall t\in [0,1].
\end{equation*}
\end{defn}
The following remark characterizes the $\lambda$-convexity of functionals when $X=\M$.
\begin{rem}\label{remarkconvexity}
Let $\Psi \in C^2(\M)$ so that we can define its Hessian at all points in $\M$ (see the proof of Theorem \ref{theoremrmcmc} in the appendix for the definition). 
Then the following conditions are equivalent:
\begin{enumerate}[(i)]
\item $\Psi$ is $\lambda$-geodesically convex.
\item $\text{\emph {Hess}}_g\, \Psi_x(v,v) \geq \lambda$ for all $x \in \M$ and all unit vectors $v\in T_x \M.$
\end{enumerate}
If $(\M,d)$ is the Euclidean space, (i) and (ii) are also equivalent to:
\begin{enumerate}[(iii)]
\item $\Psi- \frac{\lambda}{2}\lvert \cdot \rvert^2$ is a convex function.
\end{enumerate}
This latter condition is known in the optimization literature as strong convexity. 
\end{rem}

\subsection{Gradient Flows in Metric Spaces}
\label{sec:gradflows}
In this subsection we review the basic concepts needed to define gradient flows in a metric space $(X,d_X)$. We follow  Chapter 8 of \cite{santambrogio2015optimal}; a standard technical reference is \cite{ambrosio2008gradient}. 

To guide the reader, we first recall the formulation of gradient flows in Euclidean space, where $X = \R^d$ and $d_X$ is the Euclidean metric. Let $E : \R^d \rightarrow \R$ be a differentiable function, and consider the equation
\begin{align}\label{ODE}
\begin{cases}
\dot{x}(t) &=\,\, - \nabla E\bigl(x(t)\bigr), \quad t \geq 0,\\
x(0) &=\,\, x_0.
\end{cases}
\end{align}
Then, the solution  $x$   to \eqref{ODE} is the \textit{gradient flow} of $E$ in Euclidean space with initial condition $x_0$; it is a curve whose tangent vector at every point in time is the negative of the gradient of the function $E$ at that time. In order to generalize the notion of a gradient flow to functionals defined on more general metric spaces, and in particular when the metric space has no differential structure, we reformulate \eqref{ODE} in integral form by using that $\frac{d}{dt}E\bigl(x(t)\bigr) = \langle \nabla E\bigl(x(t)\bigr), \dot{x}(t)\rangle = - \frac{1}{2} |\dot{x}(t)|^2 - \frac{1}{2}|\nabla E\bigr(x(t)\bigl) |^2$ as follows: 
\begin{equation}
E(x_0) = E\bigl(x(t)\bigr) + \frac{1}{2}\int_{0}^{t} \left|\dot{x}(r)\right|^2 dr + \frac{1}{2} \int_{0}^t \left|  \nabla E \bigl(x(r)\bigr) \right|^2 dr, \quad t >0.
\label{EDE}
\end{equation}
This identity, known as energy dissipation equality,  is equivalent to \eqref{ODE} ---see Chapter 8 of \cite{santambrogio2015optimal} for further details and other possible formulations. Crucially \eqref{EDE} involves notions that can be defined in an arbitrary metric space $(X,d_X)$: the metric derivative of a curve $t \mapsto x(t) \in X$ is given by
\[ | \dot{x}(t)| := \lim_{s \rightarrow t}  \frac{d_X\bigl(x(t), x(s)\bigr)}{| s- t|},   \] 
and the {\em slope} of a functional  $E:X\to \R \cup \{\infty\}$ is defined as the map $|\nabla E|:\{x\in X: E(x)<\infty\} \to \R \cup \{\infty\}$  given by 
\begin{equation*}
|\nabla E| (x) := \limsup_{y\to x} \frac{\bigl(E(x) - E(y)\bigr)^+}{d_X(x,y)}.
\end{equation*}
The identity \eqref{EDE} is the standard way to introduce gradient flows in arbitrary metric spaces. In this paper we consider gradient flows in $L^2$ and Wasserstein spaces, where the notion of tangent vector is available. 
$L^2$ has Hilbert space structure, whereas the Wasserstein space can be seen as an infinite dimensional manifold (see \cite{ambrosio2008gradient}, \cite{santambrogio2015optimal}).

%

\section{Variational Characterizations of the Posterior and Gradient Flows}\label{sec:section3mainbody}

In this section we lay out the main elements of the theory of variational formulations and gradient flows in regards to the Bayesian update. Subsection \ref{ssec:variational} details three variational formulations defined in terms of the functionals $\jkl$, $\jchi$ and the Dirichlet energy $D^\mu$. Subsection \ref{ssec:geodesic} studies the geodesic convexity of $\jkl$ and $\jchi$  in Wasserstein space and of $D^\mu$ in $L^2$. Finally, subsection \ref{ssec:PDEdiffusion} collects the  PDEs that characterize the gradient flows, as well as the corresponding diffusion processes.

\subsection{Variational Formulation of the Bayesian Update}\label{ssec:variational}
The variational formulation of the posterior as the minimizer of $\jkl$ and $\jchi$ share the same structure and will be outlined first. The variational formulation in terms of the Dirichlet energy will be given below.

\subsubsection{The Functionals $\jkl$ and $\jchi$}\label{ssec:varjkl}
In mathematical analysis \cite{jordan199618} and probability theory \cite{dupuis2011weak} it is often useful to note that a probability measure $\mu$ defined by
\begin{equation}\label{eq:interpretation}
\mu(du) =\frac{1}{Z} \exp\bigl(-\phi(u)\bigr)\pi(du)
\end{equation}
is the minimizer of the functional
\begin{equation}\label{functionalkl}
\jkl (\nu)  := \dkl(\nu \| \pi ) + \fkl (\nu;\phi), \quad \nu \in \p(\M),
\end{equation}
where 
\begin{equation}\label{fkldefinition}
 \fkl(\nu;\phi)  :=  \int_{\M} \phi(u) d\nu(u),
\end{equation}
and the integral is interpreted as $+\infty$  if  $\phi$ is not integrable with respect to $\nu$. In physical terms, the Kullback-Leibler divergence represents an internal energy, $\fkl$ represents a potential energy, and the constant $Z$ is known as the partition function. Here we are concerned with a statistical interpretation of equation \eqref{eq:interpretation}, and view it as defining a posterior measure as a change of measure from a prior measure. In this context, the Kullback-Leibler term $\dkl(\cdot\|\pi)$ in \eqref{functionalkl} represents a penalization of deviations from prior beliefs, the term $ \fkl(\nu;\phi)$ penalizes deviations from the data, and the normalizing constant $Z$ represents the marginal likelihood. For brevity, we will henceforth suppress the data $y$ from the negative log-likelihood function $\phi$, writing $\phi(u)$ instead of $\phi(u;y).$  

We remark that the fact that $\mu$ minimizes $\jkl$ follows immediately from the identity 
\begin{equation}\label{dkljkl}
\dkl(\cdot\|\mu) =  \jkl(\cdot) + \log Z.
\end{equation}
Minimizing $\jkl(\cdot)$ or $\dkl(\cdot\|\mu)$ is thus equivalent, but the functional $\jkl$ makes apparent the roles of the prior and the likelihood.

The posterior $\mu$ also minimizes the functional 
\begin{equation}\label{functionalchi}
\jchi (\nu)  := \dchi(\nu \| \pi ) + \fchi (\nu;\phi), \quad \nu \in \p(\M),
\end{equation}
where 
\begin{equation}\label{fchi}
\fchi(\nu;\phi)  :=  \int_{\M} \tilde{\phi}(u) d\nu(u), \quad \tilde{\phi} = g\bigl(\exp(\phi(u))\bigr), \quad g(t) := t-1,\quad t>0.
\end{equation} 
We refer to \cite{japoneses2011displacement} for details. Note that both $\jkl$ and $\jchi$ are defined in terms of the two starting points of the Bayesian update: the prior $\pi$ and the negative log-likelihood $\phi.$ The associated variational formulations suggest a way to \emph{define}  posterior-type measures based on these two ingredients in scenarios where establishing the existence of conditional distributions via desintegration of measures is technically demanding. This appealing feature of the two variational formulations above is not shared by the one described in the next subsection. 
\subsubsection{The Dirichlet Energy $D^\mu$}  \label{ssec:vardirichlet}
Let now the posterior $\mu$ be given, and consider the space $L^2(\M, \mu)$ of functions defined on $\M$ which are square integrable with respect to $\mu$.  Recall the Dirichlet energy  $$D^{\mu}(f) =   \int_{\M} \| \nabla_g f(u) \|^2 d \mu(u),$$
introduced in equation \eqref{dirichletenergy}. Now, since the measure $\mu$ can be characterized as  the probability measure with density $\rho_\mu \equiv 1$ a.s. with respect to $\mu,$  it follows that the posterior density $\rho_\mu\equiv1$ is the minimizer of the Dirichlet energy $D^\mu$ over probability densities $\rho\in L^2(\M, \mu)$ with $\int_\M \rho d \mu = 1.$ \nc

\subsection{Geodesic Convexity and Functional Inequalities}\label{ssec:geodesic}
In this section we study the geodesic convexity of the functionals $\jkl$, $\jchi$, and $D^\mu$. The geodesic convexity of $\jkl$ and $\jchi$ in Wasserstein space is considered first, and will be followed by the geodesic convexity of $D^\mu$ in $L^2$. We will show the equivalence of the latter to the posterior satisfying a Poincar\'e inequality.

\subsubsection{Geodesic Convexity of $\jkl$ and $\jchi$}\label{ssec:geodesicjkljchi}
The next proposition can be found in   \cite{von2005transport} and \cite{sturm2006geometry}. It shows that the convexity of $\jkl$ can be determined by the so-called curvature-dimension condition ---a condition that involves the curvature of the manifold and the Hessian of the combined change of measure $\Psi + \phi.$ We recall the notation $\pi = e^{-\Psi} vol_g$ and $\mu \propto e^{-\phi} \pi$. 

\begin{prop}\label{propositionkl}
Suppose that $\Psi, \phi \in C^2(\M).$ Then $\jkl$ (or $\dkl(\cdot\| \mu)$) is $\lambda$-geodesically convex if, and only if, 
$$\text{\emph{Ric}}_g\,(v, v)  +  \text{\emph{ Hess}}_g\, \Psi (v,v) + \text{\emph{Hess}}_g\, \phi(v,v) \geq \lambda, \quad \forall x \in \M , \quad \forall v \in T_x \M \,\,\,\,  \text{with}  \,\,\,\, g(v,v) =1,$$
where $\text{Ric}_g$ denotes the Ricci curvature tensor.
\end{prop}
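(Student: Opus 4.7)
The plan is to reduce the statement to the classical Sturm–von Renesse characterization of displacement convexity of relative entropy. By identity \eqref{dkljkl}, $\jkl(\cdot) = \dkl(\cdot\|\mu) - \log Z$, so these two functionals have identical convexity properties and it suffices to analyze $\dkl(\cdot\|\mu)$. Writing $\mu = \frac{1}{Z} e^{-(\Psi+\phi)} vol_g$ and setting $V := \Psi + \phi$, the claim reduces to showing that $\dkl(\cdot\|\mu)$ is $\lambda$-geodesically convex on $(\ptwo(\M), \W_2)$ if and only if the Bakry–Émery tensor $\text{Ric}_g + \text{Hess}_g V$ is bounded below by $\lambda$ on unit tangent vectors.

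For the sufficiency direction, I would fix $\nu_0, \nu_1 \in \ptwo(\M)$ absolutely continuous with respect to $vol_g$, invoke the Brenier–McCann theorem to obtain the unique constant-speed displacement interpolation $(\rho_t)_{t \in [0,1]}$ connecting them, and compute the second $t$-derivative of $t \mapsto \dkl(\rho_t\|\mu)$. Using the Monge–Ampère (Jacobian) equation for the density of $\rho_t$ along the geodesic, together with the Bochner–Lichnerowicz–Weitzenböck formula, one obtains
\begin{equation*}
\frac{d^2}{dt^2}\dkl(\rho_t \| \mu) \;\geq\; \int_{\M} \bigl[\text{Hess}_g\, \Psi(v_t,v_t) + \text{Hess}_g\, \phi(v_t,v_t) + \text{Ric}_g(v_t, v_t)\bigr]\, d\rho_t(x),
\end{equation*}
where $v_t$ is the velocity field along the geodesic and the inequality absorbs a nonnegative term proportional to the squared Hilbert–Schmidt norm of the Hessian of the Kantorovich potential. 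Since $(\rho_t)$ has constant speed $\W_2(\nu_0,\nu_1)$, i.e.\ $\int g(v_t,v_t)d\rho_t \equiv \W_2^2(\nu_0,\nu_1)$, the curvature hypothesis yields $\frac{d^2}{dt^2}\dkl(\rho_t\|\mu) \geq \lambda \W_2^2(\nu_0,\nu_1)$, and integrating twice in $t$ gives the $\lambda$-geodesic convexity inequality. Density of absolutely continuous measures in $\ptwo(\M)$ (in $\W_2$) transfers the bound to arbitrary endpoints.

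For the necessity direction, I would fix $x_0 \in \M$ and a unit vector $v \in T_{x_0}\M$, and localize the convexity inequality: take a family of smooth probability densities $\nu_0^{\epsilon}$ and $\nu_1^{\epsilon}$ concentrated in balls of radius $o(\epsilon)$ around $x_0$ and $\exp_{x_0}(\epsilon v)$ respectively, so that $\W_2(\nu_0^\epsilon,\nu_1^\epsilon) = \epsilon + o(\epsilon)$, and Taylor expand both sides of the $\lambda$-convexity inequality at $t = 1/2$ in powers of $\epsilon$. The zeroth- and first-order terms cancel by construction, while the $\epsilon^2$-term on the left expands, via the same Monge–Ampère/Bochner identity used above, into $\tfrac{1}{8}[\text{Ric}_g(v,v) + \text{Hess}_g V(v,v)]$, isolating the desired pointwise inequality.

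The main obstacle is the second-derivative identity for $t \mapsto \dkl(\rho_t\|\mu)$ along Wasserstein geodesics. Making this rigorous requires either Otto's formal Riemannian calculus on $(\ptwo(\M),\W_2)$ or the Monge–Ampère route combined with Caffarelli-type regularity of the Kantorovich potential, plus an approximation argument (smoothing initial data, integrating against test functions, then passing to the limit) to handle non-smooth optimal maps. Since these computations are by now standard and are carried out in detail in \cite{sturm2006geometry}, \cite{von2005transport}, and \cite{villani2008optimal}, I would appeal to those references rather than reproduce the full derivation.
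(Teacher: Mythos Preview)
Your proposal is correct. Note, however, that the paper does not actually prove this proposition: it simply states that the result ``can be found in \cite{von2005transport} and \cite{sturm2006geometry}'' and moves on. Your sketch accurately recapitulates the Sturm--von Renesse argument from those very references (reduction via \eqref{dkljkl} to the entropy with potential $V=\Psi+\phi$, second-variation computation along displacement interpolants using the Jacobian equation and Bochner formula for sufficiency, and a localization/blow-up for necessity), and you ultimately appeal to the same sources. So your approach and the paper's are aligned; you have simply supplied the outline that the paper omits.
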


We recall that the Ricci curvature provides a way to quantify the disagreement between the geometry of a Riemannian manifold and that of ordinary Euclidean space. The Ricci tensor is defined as the trace of a map involving the Riemannian  curvature (see \cite{docarmo1992riemannian}).

The following example illustrates the geodesic convexity of $\dkl(\cdot\|\mu)$ for Gaussian $\mu.$

\begin{example}\label{ex:convexitygaussian}
Let $\mu = N(\theta, \Sigma)$ be a Gaussian measure in $\R^m$  (endowed with the Euclidean metric), with $\Sigma$ positive definite. Then $\dkl(\cdot \| \mu)$ is $1/\Lambda_{\max}(\Sigma)$-geodesically convex, where $\Lambda_{\max}(\Sigma)$ is the largest eigenvalue of $\Sigma.$ This follows immediately from the above, since here $\Psi (x) = \frac{1}{2}\langle x-\theta, \Sigma^{-1} (x-\theta)\rangle,$ and the Euclidean space is flat (its Ricci curvature is identically equal to zero). Note that the level of convexity of the functional depends only on the largest eigenvalue of the covariance, but not on the dimension $m$ of the underlying space.
\end{example}

The $\lambda$-convexity of  $\jkl$ guarantees the existence of the gradient flow of  $\jkl$ in Wasserstein space. Moreover, it determines the rate of convergence towards the posterior $\mu.$ Precisely, if $\mu_0$ is absolutely continuous with respect to $\mu,$ and if $\lambda>0$, then the gradient flow $t \in [0,\infty) \mapsto \mu_t$ of $\jkl$ with respect to the Wasserstein metric starting at $\mu_0$ is well defined and we have: 
\begin{align}\label{inequalities}
\begin{split}
\dkl(  \mu_t  \|  \mu ) &\leq e^{- \lambda t } \dkl(\mu_0\| \mu) , \quad t \geq 0, \\
 \W_2(\mu_t, \mu)^2  &\leq \lambda e^{- \lambda t }\dkl(\mu_0\| \mu) , \quad  t \geq 0.  
\end{split}
\end{align}
The second inequality, known as Talagrand inequality \cite{villani2003topics}, establishes a comparison between Wasserstein geometry and information geometry. It can be established directly combining the $\lambda$-geodesic convexity of $\jkl$ (for positive $\lambda$) with the first inequality.  From \eqref{inequalities} we see that a higher level of convexity of $\jkl$ allows to guarantee a faster rate of convergence towards the posterior distribution $\mu$.

We now turn to the geodesic convexity properties of $\jchi.$ We recall that $m$ denotes the dimension of the manifold $\M.$ The following proposition can be found in \cite[Theorem 4.1]{japoneses2011displacement}.

\begin{prop}
$\jchi$ is $\lambda$-geodesically convex if and only if both of the following two properties are satisfied:
\begin{enumerate}
\item $\text{\emph{Ric}}_g\,(v, v)  +  \text{\emph{Hess}}_g\, \Psi (v,v)  + \frac{1}{m+1} \langle \nabla_g \Psi , v  \rangle^2 \geq 0 , \quad \forall x \in \M , \quad \forall v \in T_x \M.  $
\item $\phi$ is $\lambda$-geodesically convex as a real valued function defined on $\M$.
\end{enumerate}
\end{prop}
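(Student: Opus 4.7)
The plan is to compute the Hessian of $\jchi$ along a constant-speed Wasserstein geodesic and convert the resulting inequality into the claimed pointwise conditions on $\M$. The natural first move is the decomposition
\[
\jchi(\nu) = \dchi(\nu\|\pi) + \int_\M (e^\phi - 1)\,d\nu,
\]
followed by re-expressing the first summand in terms of $\rho = d\nu/dvol_g$: using $\pi = e^{-\Psi}vol_g$ one obtains $\dchi(\nu\|\pi) = \int \rho^2 e^{\Psi} dvol_g - 1$, a \emph{non-autonomous internal energy} of McCann type with nonlinearity $F(r)=r^2$ weighted by the smooth reference factor $e^{\Psi}$. This reformulation places $\jchi$ squarely within the $m$-relative-entropy framework of \cite{japoneses2011displacement}.

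Next I would apply the Otto--McCann Eulerian calculus. For a displacement interpolation $\nu_t = (\exp_x(t\nabla\varphi))_\# \nu_0$ driven by a Kantorovich potential $\varphi$, the second derivative of each summand of $\jchi$ admits an explicit bilinear expression in $\nabla\varphi$. For the internal-energy part, the standard computation for $F(r)=r^2$ in the presence of a smooth reference weight applies; carefully tracking constants, the McCann exponent for the chi-squared nonlinearity produces the effective dimension $m+1$, so that the curvature-dimension form appearing in the Hessian is precisely
\[
\text{Ric}_g(v,v) + \text{Hess}_g\,\Psi(v,v) + \tfrac{1}{m+1}\langle \nabla_g\Psi,v\rangle^2,
\]
which is the left-hand side of condition~(1). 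For the potential part, the Hessian along the geodesic is $\int \text{Hess}_g\,\tilde\phi(\nabla\varphi,\nabla\varphi)\,d\nu_t$; combining this with the cross terms produced by the internal-energy contribution, the net contribution simplifies to one proportional to $e^\phi\,\text{Hess}_g\,\phi$, giving condition~(2).

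I expect the main obstacle to be precisely this last algebraic step. At face value one has $\text{Hess}_g\,\tilde\phi = e^\phi\bigl(\text{Hess}_g\,\phi + \nabla_g\phi \otimes \nabla_g\phi\bigr)$, which carries an extra quadratic term in $\nabla_g\phi$ that would spoil the clean form of condition~(2); the proof must show that this unwanted term cancels exactly against a matching piece produced by the internal-energy Hessian, and this cancellation is precisely what motivates the specific choice $\tilde\phi = g(e^\phi)$ with $g(t)=t-1$. Once verified, the $\lambda$-geodesic convexity of $\jchi$ reduces to the pointwise integrand bound $\geq \lambda \|v\|^2$ for all $x \in \M$ and $v \in T_x\M$. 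For the converse direction, a standard localization argument, concentrating $\nu_0$ near a point $x$ with a prescribed tangent direction $v$ and exploiting the different scales of the two contributions, forces conditions (1) and (2) to hold separately.
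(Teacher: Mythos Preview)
The paper does not prove this proposition; it is quoted directly as \cite[Theorem~4.1]{japoneses2011displacement}. So there is no argument in the paper to compare against, and your sketch is an attempted reconstruction of Ohta and Takatsu's proof. Your overall strategy---compute the second variation of $\jchi$ along a Wasserstein geodesic and reduce to a pointwise quadratic form---is the correct one, and your identification of the internal-energy Hessian with the weighted Ricci tensor carrying the coefficient $\tfrac{1}{m+1}$ (yielding condition~(1)) is right.

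There is, however, a genuine gap in your treatment of the potential term. You claim that the unwanted piece $e^{\phi}\,\nabla_g\phi\otimes\nabla_g\phi$ inside $\text{Hess}_g\,\tilde{\phi}$ ``cancels exactly against a matching piece produced by the internal-energy Hessian''. This cannot happen: the second derivative along a geodesic is linear in the functional, so
\[
\frac{d^2}{dt^2}\jchi(\nu_t)=\frac{d^2}{dt^2}\dchi(\nu_t\|\pi)+\frac{d^2}{dt^2}\int_\M\tilde{\phi}\,d\nu_t,
\]
and the first summand depends only on $\pi$ (hence only on $\Psi$), never on $\phi$; no term involving $\nabla_g\phi$ can emerge from it to absorb anything. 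Even granting the cancellation, you would be left with $e^{\phi}\,\text{Hess}_g\phi\geq\lambda$, which is still not the bare $\text{Hess}_g\phi\geq\lambda$ of condition~(2). The clean decoupling into (1) and (2) in \cite{japoneses2011displacement} comes from the specific way the free energy is assembled in their $m$-relative-entropy framework; to close the gap you should consult that reference and check precisely how the potential enters their functional and how its second variation is handled, rather than forcing the argument through the naive splitting of $\jchi$ as written in \eqref{functionalchi}--\eqref{fchi}.
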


There are two main conclusions we can extract from the previous proposition. First, that condition 1) is only related to the prior distribution $\pi$ whereas condition 2) is only related to the likelihood; in particular,  the convexity properties of $\jchi$ can indeed be studied by studying separately the prior and the likelihood (notice that the proposition gives an equivalence).  Secondly, notice that condition 1) is a qualitative property and if it is not met there is no hope that the functional $\jchi$ has any level of global convexity even when the likelihood function is a highly convex function. In addition, if 1) is satisfied, the convexity of $\phi$ determines completely the level of convexity of $\jchi$. These features are markedly different from the ones observed in the Kullback-Leibler case. 

As for the functional $\jkl$, one can establish the following functional inequalities, under the assumption of $\lambda$-geodesic convexity of $\jchi$  for $\lambda>0$: 
\begin{align}\label{inequalitiesjchi}
\begin{split}
\jchi(\mu_t) - \jchi(\mu) &\leq e^{- \lambda t } \bigl( \jchi(\mu_0) - \jchi(\mu)\bigr) , \quad t \geq 0, \\
 \W_2(\mu_t, \mu)^2  &\leq \lambda e^{- \lambda t }\bigl(\jchi(\mu_0) - \jchi(\mu)\bigr) , \quad  t \geq 0.  
\end{split}
\end{align}

The above inequalities exhibit the fact that a higher level of convexity of $\jchi$ guarantees a faster convergence towards the posterior distribution $\mu$.

\subsubsection{Geodesic Convexity of Dirichlet Energy} \label{ssec:geodesicdmu}
We now study the geodesic convexity of the Dirichlet energy functional  defined in equation \eqref{dirichletenergy}.  In what follows we denote by $\| \cdot\|$ the $L^2$ norm with respect to $\mu.$ 
Let us start recalling Poincar\'{e} inequality. 

\begin{defn}
We say that a Borel probability measure $\mu$ on $\M$ has a  Poincar\'{e} inequality with constant $\lambda$ if for every $f \in L^2(\M, \mu)$ satisfying $\int_{\M}f d \mu=0$ we have
\[ \lVert f \rVert_{\mu}^2 \leq \frac{1}{\lambda} D^\mu(f).    \]
\end{defn}

We now show that Poincar\'{e} inequalities are directly related to the geodesic convexity of the functional $D_{\mu}$ in the $L^2(\M,\mu)$ space. \begin{prop}
Let $\lambda$ be a positive real number and let $\mu$ be a Borel probability measure on $\M$. Then, the measure $\mu$ has a Poincar\'{e} inequality with constant $\lambda$ if and only if the functional $D^\mu$ is $2\lambda$-geodesically convex in the space of functions $f \in L^2(\M, \mu)$ satisfying $\int f d \mu =1 $.
\end{prop}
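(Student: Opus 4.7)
The proof is almost entirely a computation exploiting the fact that, on the affine subspace $\{f \in L^2(\M,\mu) : \int f\,d\mu = 1\}$, both the geodesic structure and the Dirichlet energy have a very simple algebraic form. My plan is the following.

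First I would identify the geodesic structure. The constraint $\int f\,d\mu = 1$ defines an affine subspace of the Hilbert space $L^2(\M,\mu)$, so linear interpolation $\gamma(t) = (1-t)f_0 + t f_1$ stays in the set and is a constant-speed geodesic with $\|\gamma(1)-\gamma(0)\|_\mu = \|f_1 - f_0\|_\mu$. Since $L^2$ is a geodesic space in which geodesics are unique, this is the only curve we need to test.

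Next I would exploit the bilinear/quadratic nature of $D^\mu$. Expanding
\begin{equation*}
D^\mu\bigl(\gamma(t)\bigr) = \int_\M \|\nabla_g\bigl((1-t)f_0 + t f_1\bigr)\|^2\, d\mu,
\end{equation*}
and using the identity $\|a+b\|^2 = \|a\|^2 + 2\langle a,b\rangle + \|b\|^2$ inside the integral, a short calculation shows
\begin{equation*}
(1-t) D^\mu(f_0) + t D^\mu(f_1) - D^\mu\bigl(\gamma(t)\bigr) = t(1-t)\, D^\mu(f_1 - f_0).
\end{equation*}
Hence the $2\lambda$-geodesic convexity inequality
$D^\mu(\gamma(t)) \le (1-t)D^\mu(f_0) + t D^\mu(f_1) - \lambda t(1-t)\|f_1-f_0\|_\mu^2$
for all $t \in [0,1]$ and all admissible $f_0,f_1$ is equivalent to
\begin{equation*}
D^\mu(f_1 - f_0) \ge \lambda \|f_1 - f_0\|_\mu^2, \qquad \text{whenever } \int f_0\,d\mu = \int f_1\,d\mu = 1.
\end{equation*}

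Finally I would translate this into the Poincaré inequality by the change of variable $h := f_1 - f_0$. The map $h \mapsto (1, 1+h)$ is a bijection between $\{h \in L^2(\M,\mu) : \int h\,d\mu = 0\}$ and pairs of admissible densities of the form $(f_0,f_0+h)$, and since both $D^\mu$ and $\|\cdot\|_\mu$ only depend on $h$, the displayed inequality is equivalent to
\begin{equation*}
\|h\|_\mu^2 \le \frac{1}{\lambda} D^\mu(h) \qquad \text{for all } h \in L^2(\M,\mu) \text{ with } \int h\,d\mu = 0,
\end{equation*}
which is precisely the Poincaré inequality with constant $\lambda$.

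I do not expect any real obstacle: the only subtle point is choosing the right geodesic (linear interpolation is both valid and, being the unique $L^2$ geodesic, makes the convexity condition equivalent to its pointwise statement), and being careful that the constraint set is rich enough to realize every mean-zero $h$ as a difference of admissible elements, which it is because the set is affine rather than merely convex.
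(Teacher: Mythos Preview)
Your proof is correct and rests on the same core computation as the paper: both derive the quadratic identity
\[
(1-t)D^\mu(f_0) + t\,D^\mu(f_1) - D^\mu\bigl((1-t)f_0 + t f_1\bigr) = t(1-t)\,D^\mu(f_1-f_0),
\]
which reduces $2\lambda$-geodesic convexity along the linear interpolation to the inequality $D^\mu(f_1-f_0)\ge \lambda\|f_1-f_0\|_\mu^2$ for admissible pairs. The forward direction (Poincar\'e $\Rightarrow$ convexity) is then identical in both arguments.

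The only genuine difference is in the converse. Given a mean-zero $f$, the paper builds the test pair $f_0=\tfrac{1}{r}f^-$, $f_1=\tfrac{1}{r}f^+$ with $r=\int f^+\,d\mu$ and applies the identity at $t=1/2$; this requires noting that $r>0$ and that $D^\mu(f^\pm)<\infty$ whenever $D^\mu(f)<\infty$. Your choice $f_0=1$, $f_1=1+h$ realizes an arbitrary mean-zero $h$ as a difference of admissible elements directly and sidesteps both of these technical checks, so it is slightly cleaner. Apart from this, the two proofs are the same.
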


\begin{proof}
First of all we claim that 
\begin{equation}\label{Claim1}
D^\mu(t f_0+ (1-t) f_1) + t(1-t)D^\mu (f_0 - f_1)   = t D^\mu(f_0) + (1-t) D^\mu(f_1), 
\end{equation}
for all $f_0, f_1 \in L^2(\M,\mu)$ and every $t \in [0,1]$. To see this, it is enough to assume that both $D^\mu(f_0)$ and $D^\mu(f_1)$ are finite and then notice that equality \eqref{Claim1} follows from the easily verifiable fact that for an arbitrary Hilbert space $V$ with induced norm $|\cdot|$ one has
\[  |t v_0 + (1-t)v_1|^2 + t(1-t)|v_0-v_1|^2= t|v_0|^2 +(1-t)|v_1|^2  , \quad \forall v_0, v_1 \in V, \quad \forall t \in [0,1]. \]


Now, suppose that $\mu$ has a Poincar\'{e} inequality with constant $\lambda$ and consider  two functions $f_0, f_1 \in L^2(\M, \mu)$ satisfying $\int_\M f_0 d\mu = \int_\M f_1 d\mu =1 $. Then, \eqref{Claim1} combined with Poincar\'{e} inequality (taking $f:= f_0 - f_1$) gives:
\begin{equation}
  D^\mu\bigl(t f_0 + (1-t)f_1 \bigr) + \lambda t(1-t)\lVert f_0 - f_1\rVert_\mu^2 \leq t D^\mu(f_0) + (1-t)D^\mu(f_1) ,
 \label{Claim2} 
\end{equation}
which is precisely the $2 \lambda$-geodesic convexity condition for $D^\mu$. 

Conversely, suppose that $D^\mu$ is $2 \lambda$-geodesic convex in the space of $L^2(\M,\mu)$ functions that integrate to one. Let $f \in L^2(\M, \mu)$ be such that $\int_{\M}f d \mu =0$ and without the loss of generality assume that $D^\mu(f)< \infty$ and that $\|f\|_\mu \not =0$. Under these conditions, the positive and negative parts of $f$, $f^+$ and $f^-$, satisfy $D^\mu(f^+), D^\mu(f^-) < \infty$ and $\int_{\M} f^+ d\mu = r = \int_{\M} f^- d \mu$ where $r>0$. The inequality 
\[ \|f\|_\mu^2 \leq \frac{1}{\lambda}D^\mu(f) \]
is obtained directly from \eqref{Claim1} and \eqref{Claim2} applied to 
 \[f_0 :=\frac{1}{r}f^- , \quad f_1:= \frac{1}{r} f^+,\quad t =1/2.\]
 \end{proof}

\begin{remark}
\label{Eigangap}
It is well known that the best Poincar\'e constant for a measure $\mu$ is equal to the smallest non-trivial eigenvalue of the operator $-\Delta_g^\mu$ defined formally as 

\begin{equation*}
- \Delta_g^\mu f:=-\frac{1}{Z} \, \text{\emph{div}}_g  \,\,  ( e^{-\phi- \psi} \nabla_g f )  ,
\end{equation*}
where $\text{\emph{\div}}_g$ and $\nabla_g$ are the divergence and gradient operators in $(\M,g)$. This eigenvalue can be written variationally as
\[ \lambda_{2}:=  \min_{f \in L^2(\M,\mu)} \frac{D_ {\mu}(f) }{\| f- f_\mu\|^2_\mu},\]
where
\[f_{\mu} := \int_\M f d\mu.\]	
\end{remark}

\begin{remark}
Spectral gaps are used in the theory of MCMC as a means to bound the asymptotic variance of empirical expectations \cite{kipnis1986central}.	
\end{remark}

Let us now consider $t \in (0,\infty) \mapsto \mu_t$ the flow of $D^\mu$ in $L^2(\M, \mu)$ with some initial condition $\frac{d\mu_0}{d\mu}= \rho_0$.  It is well known that this flow  coincides with that of the functional $\jkl$ in Wasserstein space. However, taking the Dirichlet-$L^2$ point of view, one can use a Poincar\'{e} inequality (i.e. the geodesic convexity of $D^\mu$) to deduce  the exponential convergence of $\mu_t$ towards $\mu$ in the $\chi^2$-sense. Indeed, let
\[\rho_t:= \frac{d\mu_t}{ d\mu}, \quad t \in(0,\infty). \]
A standard computation then shows that
\[\frac12 \frac{d}{dt} \lVert  \rho_t - 1 \rVert_\mu^2  = \int_{\M}(\rho_t - 1)\frac{\partial \rho }{\partial t}  d\mu = - \int_{\M} \lvert \nabla_g (\rho_t(u) - 1) \rvert^2 d \mu(u) \leq- \lambda  \lVert \rho_t -1 \rVert_\mu^2 . \]
In the second equality we have used that $\frac{\partial \rho}{\partial t} = \Delta^\mu_g \rho,$ as discussed in subsection \ref{ssec:PDEdiffusion} below. Hence by Gronwall's inequality, see e.g. \cite{teschl2012ordinary},
\[ \lVert  \rho_t - 1 \rVert_\mu \leq \exp(- \lambda t)\lVert \rho_0-1 \rVert_\mu, \quad t >0.\]

\subsection{PDEs and Diffusions}\label{ssec:PDEdiffusion}
Here we describe the PDEs that govern the evolution of densities of the three gradient flows, and the stochastic processes associated with these PDEs. We consider first the flows defined with the functionals $\jkl$ and $D^\mu$ and then the flow defined by the functional $\jchi.$ 

\subsubsection{$\jkl$-Wasserstein and $D^\mu$-$L^2(\M, \mu)$} \label{ssec:pdenormal}
It was shown in \cite{jordan1998variational} ---in the Euclidean setting and in the unweighted case $\pi = d x$--- that 
the gradient flow of the Kullback-Leibler functional $\dkl(\cdot\| \pi)$ in Wasserestein space produces a solution to the Fokker-Planck equation. More generally, under the convexity conditions guaranteeing the existence of the gradient flow $t \in (0,\infty) \mapsto \mu_t$ of $\dkl(\cdot \|\mu)$ (equivalently of $\jkl$) starting from $\mu_0\in \p(\M)$, the densities 
\[ \rho_t := \frac{d\mu_t}{d\mu}, \quad \theta_t := \frac{d\mu_t}{d vol_g}, \quad t\in (0,\infty)  \]
satisfy (formally) the following Fokker-Planck equations 
\begin{equation}\label{weightedlaplacian}
\frac{\partial\rho}{\partial t} = \Delta^\mu_g \rho.
\end{equation}
\begin{equation}
\frac{\partial \theta}{\partial t} = \Delta_g\theta +  \div_g\bigl( \theta(\nabla_g \phi + \nabla_g \Psi) \bigr).
\label{eqndensities}
\end{equation}
Equation \eqref{eqndensities} can be identified as the evolution of the densities (w.r.t. $d vol_g$) of the diffusion
\begin{equation}
dX_t = -\nabla_g\Bigl(\Psi(X_t) + \phi(X_t) \Bigr)\, dt + \sqrt{2} dB_t^g,
\label{diffusion}
\end{equation}
where $B^g$ denotes a Brownian motion defined on $(\M,g)$ and $\nabla_g$ is the gradient on $(\M, g)$. Naturally, the $D^\mu$ flow in $L^2$  has the same associated Fokker-Planck equation \eqref{weightedlaplacian} and diffusion process \eqref{diffusion}. 


%

\subsubsection{$\jchi$-Wasserstein}\label{ssec:PDEporous}
The PDE satisfied (formally) by the densities 
\[ \tilde{\rho}_t := \frac{d\mu_t}{ d \pi}, \]
of the $\jchi$-Wasserstein flow $t \in(0,\infty) \mapsto \mu_t$ is the (weighted) porous medium equation:
\begin{align}
&\frac{\partial \tilde{\rho}}{\partial t} =  \Delta^{\pi}_g \tilde{\rho}^2 + \div^\pi_g(\tilde{\rho} \nabla_g \phi),
\label{WPME}
\end{align}
where the weighted Laplacian and divergence are defined formally as
\begin{align}
\begin{split}
\Delta_g^\pi f &:= \Delta_g f - \langle \nabla_g f, \nabla_g \Psi \rangle, \quad  \\
\div^\pi_g F&:= \div_g F - \langle F, \nabla_g \Psi\rangle.
\end{split}
\label{IdentitiesWeightedDiv}
\end{align}

Consider now the stochastic process $\{ u_t \}_{t \geq 0}$ formally defined as the solution to the nonlinear diffusion
\begin{align}
\begin{split}
& dX_t =  -\bigl( \tilde \rho(t,X_t) \nabla_g \Psi( X_t) +  \nabla_g \phi(X_t ) \bigr)dt  + \sqrt{2\tilde{\rho}(t, X_t)}  d B_t^g, \quad  u_0 \sim \rho_0,
\end{split}
\label{WPMEDiff}
\end{align}
where $\tilde{\rho}$ is the solution to \eqref{WPME}. Let $\theta_t$ be the evolution of the densities (with respect to $d vol_g$ ) of the above diffusion. Then a formal computation shows that $\theta$
satisfies the Fokker-Planck equation:
\[  \frac{\partial \theta}{\partial t} = -  \div_g \Bigl( \theta \bigl( -\tilde{\rho} \nabla_g \Psi - \nabla_g \phi \bigr)   \Bigr) +  \Delta_g(  \tilde{\rho} \theta ) . \]
If we let $\beta= \frac{1}{Z}\exp(- \Psi)\theta $ we see, using \eqref{IdentitiesWeightedDiv}, that
\begin{align*}
\Delta^\pi_g \beta^2 &= e^{\Psi} \Bigl( \Delta_g(\beta^2 e^{-\Psi}) + \div_g \bigl((\beta^2 e^{-\Psi}) \nabla_g \Psi\bigr) \Bigr), \\
\div^\pi_g(\beta \nabla_g \phi) &= e^{\Psi} \div_g \bigl((\beta e^{-\Psi}) \nabla_g \Psi \bigr),
\end{align*}
implying that the distributions of the stochastic process \eqref{WPMEDiff} are those generated by the gradient flow of $\jchi$ in Wasserstein space. 

\begin{remark}
In contrast with the Langevin diffusion \eqref{diffusion}, the process \eqref{WPMEDiff} is defined in terms of the solution of the equation satisfied by its densities. In particular, if one wanted to simulate \eqref{WPMEDiff} one would need to know the solution of \eqref{WPME} before hand. 
\end{remark}
\nc

\section{Application: Sampling and Riemannian MCMC}
\label{sec:RMCMC}
So far we have treated the Riemannian manifold $(\M,g) $ as fixed. In this section we take a different perspective and treat the metric $g$ as a free parameter. Precisely, we will now consider a family of gradient flows of the functional $\jkl$ with respect to Wasserstein distances induced by different metrics $g$ on the parameter space. We do this motivated by the so called Riemannian MCMC methods for sampling, where a change of metric in the base space is introduced in order to produce Langevin-type proposals that are adapted to the geometric features of the target, thereby exploring regions of interest and accelerating the convergence of the chain to the posterior. There are different heuristics regarding the choice of metric  (see \cite{girolami2011riemann}), but no principled way to compare different metrics and rank their performance for sampling purposes. With the developments presented in this paper we propose one such principled criterion as we describe below. We restrict our attention to the case $\M = \R^m$. 

Let $g$ be a Riemannian metric tensor on $\R^m$ defined via
\[ g_x(u,v):= \langle G(x) u  , v \rangle, \quad u,v \in \mathcal{T}_x \M ,  \]
where for every $x \in \R^m$, $G(x)$ is a $m \times m$ positive definite matrix. In what follows we identify $g$ with $G$ and refer to both as `the metric'  and we use terms such as $g$-geodesic, $g$-Wassertein distance, etc. to emphasize that the notions considered are being constructed using the metric $g$.  Let $d_g$ be the distance induced by the metric tensor $g$ and let $vol_g$ be the associated volume form. Notice that in terms of the Lebesgue measure and the metric $G$, we can write 
\[ d vol_g(x) = \sqrt{\det\bigl(G(x)\bigr) } dx.\]
We use the canonical basis for $\R^m$ as global chart for $\R^m$ and consider the canonical vector fields  $\frac{\partial}{\partial x_1}, \dots, \frac{\partial}{\partial x_m}$. The \textit{Christoffel symbols} associated to the Levi-Civita connection of the Riemannian manifold $(\R^m,g)$ can be written in terms of derivatives of the metric as
\begin{equation}\label{Christoffelsymbols}
\Gamma_{ij}^l = \frac{1}{2} \left(  \pX{j} G_{ki} + \pX{i}G_{kj} - \pX{k}G_{ij}  \right)G^{-1}_{lk}, 
\end{equation}
where in the right hand-side ---and in what follows---  we use Einstein's summation convention. The proof of the following result is in the Appendix. 

\begin{thm}\label{theoremrmcmc}
Let $F\in C^2(\R^m)$ and 
$$\mu(du) \propto  \exp\bigl(-F(u)\bigr) du.$$
The sharp constant $\lambda$ for which $\jkl$ (or $\dkl(\cdot\| \mu)$) is $\lambda$-geodesically convex in the $g$-Wasserstein distance is equal to
\[ \lambda_G := \inf_{x \in \R^m} \Lambda_{\min} \Bigl( G^{-1/2} \bigl(B + \text{\emph{Hess}}\, F -C \bigr)G^{-1/2}  \Bigr), \]
where  $\text{\emph {Hess}}\, F$ is the usual (Euclidean) Hessian matrix of $F,$ $B$ is the matrix with coordinates
\begin{equation}\label{def:B}
B_{ij}:=  \frac{\partial  \Gamma_{ij}^l }{\partial x_l} - \Gamma_{il}^k \Gamma_{jk}^l,
\end{equation}
and $C$ is the matrix with coordinates
\begin{equation}\label{def:C}
 C_{ij}:= \Gamma_{ij}^l \frac{\partial F}{\partial x_l}.
\end{equation}
Moreover, for any $a>0$, 
\begin{equation}\label{homogeneity}
\lambda_{aG} = \frac{1}{a} \lambda_G.
\end{equation}
\end{thm}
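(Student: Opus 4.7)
The plan is to apply Proposition \ref{propositionkl} to the specific setting $(\M,g)=(\R^m,g)$, and then translate the resulting intrinsic condition into Euclidean quantities by expressing $\text{Ric}_g$ and $\text{Hess}_g$ in the global canonical chart. Write the posterior as $\mu \propto e^{-V}\,vol_g$ so that $\text{Ric}_g + \text{Hess}_g\Psi + \text{Hess}_g\phi = \text{Ric}_g + \text{Hess}_g V$; comparing the Lebesgue density $e^{-F}\,du$ with $vol_g = \sqrt{\det G}\,du$ yields the key identification
\[
V \;=\; F + \tfrac{1}{2}\log\det G
\]
(up to an additive constant). Thus Proposition \ref{propositionkl} reduces the task to computing $\text{Ric}_g + \text{Hess}_g F + \tfrac{1}{2}\text{Hess}_g\log\det G$ and comparing it to $g$.

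Next, expressing the Riemannian Hessian in local coordinates gives $(\text{Hess}_g F)_{ij} = \partial_i\partial_j F - \Gamma^k_{ij}\partial_k F = (\text{Hess}\, F - C)_{ij}$ with $C$ as in \eqref{def:C}. The heart of the argument is the identity
\[
\text{Ric}_g + \text{Hess}_g\bigl(\tfrac{1}{2}\log\det G\bigr) \;=\; B,
\]
with $B$ as in \eqref{def:B}. To prove it, use the standard formula $\Gamma^l_{il} = \tfrac{1}{2}\partial_i \log\det G$ (which follows from \eqref{Christoffelsymbols} after cancellation using symmetry of $G^{-1}$), substitute it into the coordinate expression
\[
\text{Ric}_{ij} \;=\; \partial_l \Gamma^l_{ij} - \partial_j \Gamma^l_{il} + \Gamma^l_{lm}\Gamma^m_{ij} - \Gamma^l_{jm}\Gamma^m_{il},
\]
and observe that the term $\partial_j\Gamma^l_{il} = \tfrac{1}{2}\partial_i\partial_j\log\det G$ cancels with the corresponding Euclidean Hessian term in $\text{Hess}_g(\tfrac{1}{2}\log\det G)$, while the term $\Gamma^l_{lm}\Gamma^m_{ij}$ cancels with the Christoffel correction $-\tfrac{1}{2}\Gamma^k_{ij}\partial_k\log\det G$. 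Combining, one obtains
\[
\text{Ric}_g + \text{Hess}_g V \;=\; B + \text{Hess}\,F - C
\]
as symmetric bilinear forms on each $T_x\R^m$. The condition from Proposition \ref{propositionkl} thus becomes: for every $x\in\R^m$ and every $v\in T_x\R^m$ with $v^\top G(x) v = 1$, $v^\top (B+\text{Hess}\,F - C) v \geq \lambda$. Setting $w=G^{1/2}v$ and optimizing over unit $w$ yields exactly $\lambda_G$ as claimed; sharpness follows because Proposition \ref{propositionkl} is stated as an equivalence.

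Finally, to prove the homogeneity \eqref{homogeneity}, observe from \eqref{Christoffelsymbols} that the Christoffel symbols are invariant under the rescaling $G\mapsto aG$ (both factors $(\partial G)$ and $G^{-1}$ contribute compensating powers of $a$). Hence $B$ and $C$, being polynomial expressions in the $\Gamma^l_{ij}$ and their first derivatives together with derivatives of $F$, are unchanged. The only dependence on the scaling factor comes from the conjugation by $G^{-1/2}$, which becomes $a^{-1/2}G^{-1/2}$, so the smallest eigenvalue scales by $1/a$, yielding $\lambda_{aG}=\lambda_G/a$. The main obstacle is the coordinate computation of the key identity $\text{Ric}_g + \text{Hess}_g(\tfrac{1}{2}\log\det G) = B$, which is a standard but bookkeeping-heavy manipulation with Christoffel symbols that I would isolate as a short lemma to keep the main argument transparent.
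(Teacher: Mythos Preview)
Your proposal is correct and follows essentially the same route as the paper: rewrite $\mu\propto e^{-V}\,vol_g$ with $V=F+\tfrac12\log\det G$, apply Proposition~\ref{propositionkl}, express $\text{Hess}_g$ and $\text{Ric}_g$ in the canonical chart using $\Gamma^l_{il}=\tfrac12\partial_i\log\det G$ and the standard Ricci formula, cancel to obtain $B+\text{Hess}\,F-C$, and finish with the substitution $w=G^{1/2}v$ and the scale-invariance of the Christoffel symbols for~\eqref{homogeneity}. The only difference is presentational---you isolate the identity $\text{Ric}_g+\text{Hess}_g(\tfrac12\log\det G)=B$ as a standalone lemma, whereas the paper carries out the computation in one pass and simplifies at the end.
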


Note that $\lambda_G$ is a key quantity in evaluating the quality of a metric $G$ in building geometry-informed Langevin diffusions for sampling purposes, as it gives the exponential rate at which the evolution of probabilities built using the metric $G$ converges towards the posterior:  larger $\lambda_G$ corresponds to faster convergence. However, in order to establish a fair performance comparison, the metrics need to be scaled appropriately. Indeed a faster rate can be obtained by scaling  down  the metric (which can be thought of as time-rescaling), as it is clearly seen by the scaling property \eqref{homogeneity} of the functional $\lambda_G.$ It is important to note that scaling down the metric leads to a faster diffusion, but also makes its discretization more expensive.
Indeed the error of Euler discretizations is largely influenced by the Lipschitz constant of the drift. This motivates that a fair criterion for choosing the metric could be to maximize $\lambda_G$ with the constraint 
\begin{equation}\label{Criterion}
\text{Lip}( \nabla_g F) = \text{Lip}(G^{-1} \nabla F) \le 1,  
\end{equation}
since $\nabla_g F  = G^{-1}\nabla F$ (where $\nabla$ denotes the standard Euclidean gradient) is the drift of the diffusion \eqref{diffusion}. Note that the constraint  \eqref{Criterion} ensures that the metric cannot be scaled down arbitrarily while also guaranteeing that the discretizations do not become increasingly expensive. We remark that other constraints involving higher regularity requirements may be useful if higher order discretizations are desired.

\begin{remark}
The functional $\lambda_G$ can be used to determine the optimal metric among a certain subclass of metrics of interest satisfying the condition \eqref{Criterion}. For instance, it may be of interest to find the optimal constant metric $G$ (see Proposition \ref{example} below), or to find the best metric within a finite family of metrics. On the other hand the constraint \eqref{Criterion} forces feasible metrics to  induce diffusions that are not expensive to discretize. 
\end{remark}

%
%

To illustrate the previous remark we show that for a Gaussian target measure
the optimal preconditioner is, unsurprisingly, given by the Fisher information.  More precisely we have the following proposition:
\begin{prop}\label{example}
Let $\mu = N(0,\Sigma).$ Then 
\begin{equation}\label{optimalggauss}
G^* :=  \Sigma^{-1}
\end{equation}
maximizes $\lambda_G$ over the class of constant metrics $G$ satisfying $\| G^{-1} \Sigma^{-1} \|  \le 1,$ as in \eqref{Criterion}.
Moreover, the maximum value is 
$$\lambda_{G^*} = 1.$$

\begin{proof}
Suppose for the sake of contradiction that there exists a constant metric $G$ that satisfies condition \eqref{Criterion}, which in this case reads $\lVert G^{-1} \Sigma^{-1} \rVert \leq 1$ and is such that $  \lambda_G > \lambda_{G^*}.$

Let $u$ be a unit norm eigenvector of $G$ with eigenvalue $\lambda>0$. Notice that by definition of $\lambda_{G}$ we must have
\begin{equation}
\langle G^{-1/2} \Sigma^{-1} G^{-1/2} u, u \rangle \geq \lambda_{G} > \lambda_{G^*} =1.  
\label{auxDiscretOptimal}
\end{equation}

The left hand side of the above display can be rewritten as
\[ \langle G^{-1/2} \Sigma^{-1} G^{-1/2} u, u \rangle =   \langle  G^{-1}\Sigma^{-1}  G^{-1/2} u, G^{1/2} u \rangle    \]
and by Cauchy-Schwartz inequality we see that
\[ \langle  G^{-1}\Sigma^{-1}  G^{-1/2} u, G^{1/2} u \rangle \leq \lVert G^{-1}\Sigma^{-1}  G^{-1/2} u  \rVert \cdot \lVert G^{1/2} u\rVert \leq  \lVert  G^{-1/2} u  \rVert \cdot \lVert G^{1/2} u\rVert \]

Since $u $ is an eigenvector of $G$ with eigenvalue $\lambda$, it follows that $u$ is also an eigenvector of $G^{1/2}$ with eigenvalue $\sqrt{\lambda}$ and of $G^{-1/2}$ with eigenvalue $\frac{1}{\sqrt{\lambda}}$. Therefore the right hand side of the above display is equal to one. This however contradicts \eqref{auxDiscretOptimal}. From this we deduce the optimality of $G^*$ among feasible metrics. 

\end{proof}
\end{prop}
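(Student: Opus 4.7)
The plan is to compute $\lambda_{G^*}$ directly via Theorem \ref{theoremrmcmc}, verify the feasibility constraint, and then establish optimality by contradiction using the operator-norm bound implied by \eqref{Criterion}.

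First I would specialize the formula for $\lambda_G$ to constant metric tensors. When $G$ is constant, every partial derivative $\partial_k G_{ij}$ vanishes, so by \eqref{Christoffelsymbols} all Christoffel symbols are zero, and both matrices $B$ from \eqref{def:B} and $C$ from \eqref{def:C} are identically zero. The formula in Theorem \ref{theoremrmcmc} collapses to
\begin{equation*}
\lambda_G \;=\; \inf_{x\in\R^m}\Lambda_{\min}\bigl(G^{-1/2}\,\text{Hess}\, F(x)\,G^{-1/2}\bigr).
\end{equation*}
For $\mu = N(0,\Sigma)$ one has $F(u) = \tfrac{1}{2}\langle u,\Sigma^{-1}u\rangle$ up to an additive constant, so $\text{Hess}\, F \equiv \Sigma^{-1}$. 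Substituting $G^* = \Sigma^{-1}$ yields $G^{*-1/2}\Sigma^{-1}G^{*-1/2} = \Sigma^{1/2}\Sigma^{-1}\Sigma^{1/2} = I$, whence $\lambda_{G^*}=1$. The constraint \eqref{Criterion} is satisfied with equality, since $G^{*-1}\Sigma^{-1} = I$ has operator norm $1$; note that for the quadratic $F$, the drift $G^{-1}\nabla F(u) = G^{-1}\Sigma^{-1}u$ is linear, so its Lipschitz constant is exactly $\|G^{-1}\Sigma^{-1}\|$.

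For optimality I would argue by contradiction: suppose some constant admissible $G$ with $\|G^{-1}\Sigma^{-1}\|\le 1$ satisfies $\lambda_G > 1$. Since $G$ is positive definite, I can pick a unit eigenvector $u$ of $G$ with eigenvalue $\lambda>0$; this $u$ is then also an eigenvector of $G^{1/2}$ with eigenvalue $\sqrt{\lambda}$ and of $G^{-1/2}$ with eigenvalue $1/\sqrt{\lambda}$. From the variational characterization of the smallest eigenvalue,
\begin{equation*}
\langle G^{-1/2}\Sigma^{-1}G^{-1/2}u,\,u\rangle \;\ge\; \lambda_G \;>\; 1.
\end{equation*}
Rewriting the same quadratic form as $\langle G^{-1}\Sigma^{-1}\cdot G^{-1/2}u,\,G^{1/2}u\rangle$ and applying Cauchy--Schwarz together with the admissibility bound and the eigenvector identities gives
\begin{equation*}
\langle G^{-1/2}\Sigma^{-1}G^{-1/2}u,\,u\rangle \;\le\; \|G^{-1}\Sigma^{-1}\|\cdot\|G^{-1/2}u\|\cdot\|G^{1/2}u\| \;\le\; \tfrac{1}{\sqrt{\lambda}}\cdot\sqrt{\lambda} \;=\; 1,
\end{equation*}
which contradicts the previous display.

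The main subtlety lies in this last step: testing the quadratic form on a generic unit vector does not work, because the product $\|G^{-1/2}u\|\cdot\|G^{1/2}u\|$ is not bounded by $1$ in general (it can exceed $1$ by a Kantorovich-type gap whenever $u$ is not an eigenvector of $G$). Choosing $u$ to be an eigenvector of $G$ is precisely what makes the two factors cancel and produces the sharp constant $1$; this is the one non-cosmetic ingredient in the proof. The other two steps---reducing Theorem \ref{theoremrmcmc} to the constant-metric case and reading off the Hessian of the Gaussian potential---are essentially bookkeeping.
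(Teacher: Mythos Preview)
Your proof is correct and follows essentially the same approach as the paper: both argue by contradiction, test the quadratic form $\langle G^{-1/2}\Sigma^{-1}G^{-1/2}u,u\rangle$ on a unit eigenvector of $G$, rewrite it as $\langle G^{-1}\Sigma^{-1}G^{-1/2}u,G^{1/2}u\rangle$, and apply Cauchy--Schwarz together with the constraint to obtain the contradiction. Your write-up is in fact slightly more complete, as you explicitly reduce Theorem~\ref{theoremrmcmc} to the constant-metric case, verify $\lambda_{G^*}=1$ and the feasibility of $G^*$, and explain why the eigenvector choice is essential---points the paper leaves implicit.
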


\begin{example}
	\label{Example2}
Suppose that $F(u) =\frac12 \langle \Sigma^{-1} u, u \rangle,$ with 
\begin{equation*}
\Sigma = 
\begin{bmatrix} 
1 & 0 \\
0 & \epsilon 
\end{bmatrix}.
\end{equation*}
Consider the optimal metric 
$$G^* = 
\begin{bmatrix} 
1 & 0 \\
0 & \epsilon^{-1} 
\end{bmatrix}$$
given by the previous proposition and the rescaled Euclidean metric $G_e$
\begin{equation*}
G_e = 
\begin{bmatrix} 
\epsilon^{-1} & 0 \\
0 & \epsilon^{-1} 
\end{bmatrix},
\end{equation*}
where the scalings have been chosen so that
$$\emph{Lip}(G^{*-1} \nabla F) = \emph{Lip} (G_e^{-1} \nabla F) = 1.$$
A calculation then shows that $\lambda_{G^*} = 1$ while $\lambda_{G_e} = \epsilon.$
Note that if the Euclidean metric is not rescaled by $\epsilon^{-1}$ ---violating the constraint \eqref{Criterion}---  then the same unit rate of convergence as with the metric $G^*$ is achieved. However, the drift of the associated diffusion 
$$dX_t = - \Sigma^{-1} X_t\, dt + \sqrt{2} \, dB_t $$
is of order $\epsilon^{-1},$  making the discretization increasingly expensive in the small $\epsilon$ limit. On the other hand, since both $G^{*-1}$ and $G_e^{-1}$ are of order $\epsilon$, the drifts for both associated diffusions are order $1$. This motivates our choice of constraint in equation \eqref{Criterion}. 
\end{example}

\section{Example: Semi-Supervised Learning}\label{sec:examplesgeodesic}

In this section we study the geodesic convexity of functionals arising in the Bayesian formulation of semi-supervised classification. Our purpose is to illustrate the concepts in a tangible setting, and to show that establishing sharp levels of geodesic convexity may be more tractable for some functionals than others. 

In semi-supervised classification one is interested in the following task: given a data cloud $X= \{ x_1, \dots, x_n \}$ together with (noisy) labels $y_i \in \{-1,1\}$ for some of the data points $x_i,$ $i \in \mathcal{Z} \subset \{1, \dots, n \}$, classify the unlabeled data points by assigning labels to them. Here we assume to have access to a weight matrix $W$ quantifying the level of similarity between the points in $X$. Thus, we focus on the graph-based approach to semi-supervised classification, which boils down to propagating the known labels to the whole cloud, using the geometry of the weighted graph $(X,W)$. We will investigate the existence and convergence of gradient flows for several Bayesian graph-based classification models proposed in \cite{bertozziuncertainty}. In the Bayesian approach, the geometric structure that the weighted graph imposes on the data cloud is used to build a prior on a latent space, and the noisy given labels are used to build the likelihood. The Bayesian solution to the classification problem is a measure on the latent space, that is then push-forwarded into a measure on the label space $\{-1,1\}^n$. This latter measure contains information on the most likely labels, and also provides a principled way to quantify the remaining uncertainty on the classification process.

Let $(X, W)$ then be a weighted graph, where $X= \{ x_1, \dots, x_n \}$ is the set of nodes of the graph and $W$ is the weight matrix between the points in $X$. All the entries of $W$ are non-negative real numbers and we assume that $W$ is symmetric. Let $L$ be the graph Laplacian matrix defined by
\[  L:= D - W, \]
where $D$ is the degree matrix of the weighted graph, i.e.,  the diagonal matrix with diagonal entries $D_{ii}= \sum_{j=1}^n W_{ij}$. The above corresponds to the \textit{unnormalized} graph Laplacian, but different normalizations are possible \cite{von2007tutorial}. The graph-Laplacian will be used in all the models below to favor prior draws of the latent variables that are consistent with the geometry of the data cloud.

\begin{rem}
	\label{convEigen}
	A special case of a weighted graph $(X,W)$ frequently found in the literature is that in which the points in $X$ are i.i.d. points sampled from some distribution on a manifold $\M$ embedded in $\R^d$, and the similarity matrix $W$ is obtained as
	\[  W_{ij} =  K \left(\frac{\lvert x_i - x_j \rvert}{ r }\right).\]
	In the above, $K$ is a compactly supported kernel function, $\lvert x_i - x_j \rvert$ is the Euclidean distance between the points $x_i$ and $x_j,$ and $ r >0$ is a parameter controlling data density. It can be shown (see \cite{burago2013graph} and \cite{TrillosBreakThrough}) that the smallest non-trivial eigenvalue of a rescaled version of the resulting graph Laplacian is close to the smallest non-trivial eigenvalue of a weighted Laplacian on the manifold, provided that $r$ is scaled with $n$ appropriately. 
\end{rem}

We will now study the probit and logistic models in subsection \ref{sssec:probit}, and then the Ginzburg-Landau model in \ref{sssec:glandau}.

\subsection{Probit and Logistic Models}\label{sssec:probit}
Traditionally, the \textit{probit} approach to semi-supervised learning is to classify the unlabeled data points by first optimizing the functional $G:\R^n\to \R$ given by
\begin{equation}\label{definitionGH}
G(u):= \frac{1}{2}\langle    L^{  \alpha }  u , u \rangle - \sum_{j \in \mathcal{Z}} \log\bigl(H( y_j u_j)  ; \gamma \bigr), \quad H(w;\gamma):= \int_{-\infty}^w \exp(-t^2/2\gamma^2)\, dt 
\end{equation}
over all $u \in \R^n$ satisfying $\sum_{i=1}^n u_i =0,$ and then thresholding the optimizer with the sign function;   the parameter $\alpha>0$ is used to  regularize the functions $u$.  The minimizer of the functional $G$ can be interpreted as the MAP (maximum a posteriori estimator) in the Bayesian formulation of probit semi-supervised learning (see \cite{bertozziuncertainty}) that we now recall: 

\textbf{Prior:}
Consider the subspace $U:= \{ u \in \R^n \: : \: \sum_{i=1}^n u_i =0  \}$ and let $\pi$ be the Gaussian measure on $U$ defined by
\begin{equation}\label{priorprobit}
\frac{d \pi }{du}(u) \propto \exp \left( - \frac{1}{2}\langle L^\alpha  u , u \rangle \right)=:\exp\bigl(-\Psi(u)\bigr).
\end{equation}
The measure $\pi$ is interpreted as a prior distribution on the space of real valued functions on the point cloud $X$ with average zero. Larger values of $\alpha>0$ force more regularization of the functions $u$.

\textbf{Likelihood function:}
For a fixed $u \in U$ and for $j \in \mathcal{Z}$ define
\[ y_j = S( u_j + \eta_j  ),  \]
where the $\eta_j$ are i.i.d. $N(0, \gamma^2),$ and $S$ is the sign function. This specifies the distribution of observed labels given the underlying latent variable $u$. We then define, for given data $y$, the negative log-density function
\begin{equation}\label{sssec:likelihoodprobit}
\phi(u; y) := - \sum_{j \in \mathcal{Z}}\log\bigl( H(y_j  u_j ; \gamma) \bigr) , \quad u \in U,
\end{equation}
where $H$ is given by \eqref{definitionGH}.

\textbf{Posterior distribution:}
As shown in \cite{bertozziuncertainty},  a  simple application of Bayes' rule  gives  the posterior distribution of $u$ given $y$ (denoted by $\mu^y$):
$$\frac{d\mu^y}{d\pi}(v) = \exp\bigl( -\phi(v;y) \bigr); \quad \frac{d\mu^y}{dv}(v) \propto \exp\bigl( -\Psi(v) - \phi(v;y)\bigr), $$
where $\Psi$ is given by \eqref{priorprobit}, and $\phi$ is given by \eqref{sssec:likelihoodprobit}.

\bigskip

From what has been discussed in the previous sections, the posterior $\mu^y$ can be characterized as the unique minimizer of the energy 
\begin{equation}\label{energyminimizationforlogit}
\jkl( \nu  ):= \dkl( \nu  \| \pi) + \int_{\R^n}\phi( u;y) d \nu(u), \quad  \nu \in \mathcal{P}(\R^n).
\end{equation}
Let us first consider the gradient flow of $\jkl$ with respect to the usual Wassertsein space (i.e. the one induced by the Euclidean distance).

We can study the geodesic convexity of this functional by studying independently the convexity properties of $ \dkl( \nu  \| \pi)$ and of $\phi( \cdot; y)$. Precisely:
\begin{enumerate}[i)]
	\item Since $\pi$ is a Gaussian measure with covariance $ L^{-\alpha} $,  Example \ref{ex:convexitygaussian} shows that $\dkl( \nu  \| \pi)$ is $(\Lambda_{\min}(L))^\alpha$-geodesically convex in Wasserstein space, where $\Lambda_{\min}(L)$ is the smallest non-trivial eigenvalue of $L.$
	\item The function $\phi(\cdot; y)$ is convex ---see the appendix of \cite{bertozziuncertainty}. Hence, the functional $\fkl(\nu) = \int_{\R^n}\phi( u;y) d \nu(u)$ is $0$-geodesicaly convex in Wasserstein space. 
\end{enumerate}
It then follows from Proposition \ref{propositionkl} that $\jkl$ is $(\Lambda_{\min}(L))^\alpha$-geodesically convex in Wasserstein space. As a consequence, if we consider $t \in [0,\infty) \mapsto \mu_t$, the gradient flow of $\jkl$ with respect to the Wasserstein distance starting at $\mu_0$ (an absolutely continuous measure with respect to $\mu$), geometric inequalities can be immediately obtained from \eqref{inequalities}; such inequalities will not deteriorate with $n$ ---see Remark \ref{convEigen}.

However, the diffusion associated to this flow is given by 
\begin{equation}
dX_t = (-L^{ \alpha }X_t - \nabla \phi(X;y))\, dt  + \sqrt{2} dB_t,  
\label{difubasico}
\end{equation}
and in particular its drift (more precisely the term $L^\alpha X_t$) deteriorates as $n$ gets larger. Notice that if we wanted to control the cost of discretization by rescaling the Euclidean metric (as exhibited in Example \ref{Example2}), the geodesic convexity of the resulting flow would vanish as $n $  gets larger.

The previous discussion shows that the flow of $\jkl$ in the usual Wasserstein sense does not produce a flow with good  convergence properties that at the same time is cheap to discretize (robustly in $n$). This motivates considering the gradient flow of $\jkl$ with respect to the Wasserstein distance induced by a certain constant metric $g$. Indeed, inspired by Proposition \ref{example}, let us consider the constant metric tensor
\[  G:= L^\alpha. \]
 Since the metric tensor is constant, in particular its induced volume form $vol_g$ is proportional to the Lebesgue measure and hence we can write  
\[  d\mu^y(u) \propto  \exp \left( -\phi(u;y) - \frac{1}{2} \langle L^\alpha  u , u \rangle  \right) d vol_g(u) , \quad u \in  U .   \]
On the other hand, from the discussion in Section \ref{ssec:pdenormal} we know that the densities of the stochastic process 
\[ dX_t = - \nabla_g( \phi(X_t;y) + \frac{1}{2}\langle L^\alpha X_t, X_t \rangle   )dt + \sqrt{2}dB_t^g \]
correspond to to the gradient flow of the energy $\jkl$ with respect to the Wasserstein distance induced by the metric $g$, where $B^g$ is a Brownian motion on $(\R^m, g)$. This diffusion can be rewritten in terms of the standard Euclidean gradient $\nabla$ and Brownian motion $B$ as
\begin{equation}
 dX_t = -( X_t +L^{-\alpha}\nabla \phi(X_t; y)  )dt + \sqrt{2L^{-\alpha}}dB_t,  
 \label{pCNDiffu}
\end{equation}
after noticing that
\[ \nabla_g = G^{-1}  \nabla, \quad  B^g = \sqrt{G^{-1}} B,\]
where for the second identity we have used the fact that $G$ is constant.  How convex is the energy $\jkl$ with respect to the Wasserstein distance induced by $g$?  Since the metric tensor $G$ is constant it follows that
\[  \lambda_G := \inf_{x \in \R^m} \Lambda_{\min} \Bigl( G^{-1/2} \bigl( \text{Hess}\, F \bigr)G^{-1/2}  \Bigr), \]
where $F(u):= \phi(u;y) + \frac{1}{2}\langle L^\alpha u , u \rangle $. Finally, due to the convexity of $\phi(u;y)$ we deduce that 
\[  \lambda_G \geq \Lambda_{\min} \Bigl( G^{-1/2}  L^\alpha G^{-1/2}  \Bigr) = 1 . \]

We notice that in \eqref{pCNDiffu} $L$ appears as $L^{-\alpha}$. This is a fundamental difference from \eqref{difubasico} (where $L$ appears as $L^\alpha$) with computational advantages, given that the eigenvalues of $L$ grow towards infinity.

\begin{remark}
	A carefully designed discretization of \eqref{pCNDiffu} induces the so called Langevin pCN proposal for MCMC computing (see \cite{cotter2013mcmc}). 
\end{remark}

\begin{rem}
	In the above we have considered a probit model for the likelihood function. The ideas generalize straightforwardly to other settings, notably the logistic model
	\begin{equation}\label{logisticeq}
	\phi(u; y) := - \sum_{j \in \mathcal{Z}}\log\bigl( \sigma(y_j  u_j ; \gamma) \bigr) , \quad u \in U,
	\end{equation}
	where
	$$\sigma(t; \gamma):= \frac{1}{1+e^{-t/\gamma}}.$$
	The convexity of $\phi$ for the logistic model \eqref{logisticeq} can be established by direct computation of the second derivative of $\sigma.$
\end{rem}

\nc

%
%
%
%
%

\subsection{Ginzburg-Landau Model}\label{sssec:glandau}
We now present the Ginzburg-Landau model for semi-supervised learning. This model will provide us with an example of a functional $\jkl$ whose geodesic convexity with respect to Wasserstein distance is not positive (and hence one can not deduce geometric inequalities describing the rate of convergence towards the posterior), but for which one can obtain a positive spectral gap giving the rate of convergence of the flow of Dirichlet energy in the $L^2$ sense.

Let
$$W_\epsilon(t): = \frac{1}{4\epsilon}(t^2 - 1)^2, \quad t \in \R.$$
We consider the following Bayesian model.

\textbf{Prior:}
\begin{equation}\label{priorgl}
\frac{d\pi}{dv}(u) \propto \exp\Bigl( -\frac{1}{2}\langle u,L^\alpha u\rangle - \sum_{j\in \mathcal{Z}} W_\epsilon\bigl(u_j \bigr)  \Bigr) =: \exp\bigl(-\Psi(u)\bigr), \quad u\in U.
\end{equation}

\textbf{Likelihood function:}
For $j\in \mathcal{Z},$ 
$$y_j =  u_j + \eta_j, \quad \quad \eta_j \sim N(0,\gamma^2).$$
This leads to the following negative log-density function:
\begin{equation}\label{likelihoodgl}
\phi(u;y) =\frac{1}{2\gamma^2} \sum_{j\in \mathcal{Z}} |y_j - u_j|^2.
\end{equation}

\textbf{Posterior distribution:} Combining the prior and the likelihood via Bayes' formula gives the posterior distribution
$$\frac{d\mu^y}{d\pi}(u) = \exp\bigl( -\phi(u;y) \bigr); \quad \frac{d\mu^y}{du}(u) \propto \exp\bigl( -\Psi(u) - \phi(u;y)\bigr), \quad u \in U, $$
where $\Psi$ is given by \eqref{priorgl}, and $\phi$ is given by \eqref{likelihoodgl}.

For this model, the negative prior log-density $\Psi$ is not convex, and Wasserstein $\lambda$-geodesic convexity of the functional $\dkl(\cdot\| \pi)$ only holds for negative $\lambda.$  In particular, it is not possible to deduce exponential decay taking the Wasserstein flow point of view. However, in the $L^2$/Dirichlet energy setting we can still show exponential convergence towards the posterior $\mu^y$. Indeed,
because the negative log-likelihood of $\mu^y$ satisfies:
\[ \frac{|\nabla \Psi(u) + \nabla \phi(u;y) |^2}{2} - \Delta\Psi(u) - \Delta\phi(u;y) \rightarrow \infty, \quad \text{ as } |u| \rightarrow \infty, \]
there exists some $\lambda>0$ for which $\mu^y$ has a Poincar\'e inequality with constant $\lambda$ (see Chapter 4.5 in \cite{pavliotis2014stochastic}). In this example we can say more, and in particular we are able to find a Poincar\'e constant that depends explicitly on $\veps$,  the smallest non-trivial eigenvalue of $L,$ and $k:=|\mathcal{Z}|$. 
\nc

Let $\psi(u):=\sum_{j \in \mathcal{Z}} W_\veps(u_j)   $ and let  $\psi_c$ be its convex envelope, i.e. let $\psi_c$ be the largest convex function that is below $\psi$. It is straightforward to show that $\psi_c(0) = 0$ and that
\[ \inf_{u \in \R^n} \{\exp\left( \psi_c(u) - \psi(u))\right \} =\exp\left(-\frac{k}{4\veps}\right). \] 
Consider now the probability measure $\mu_c$ with Lebesgue density
\[ d \mu_c(u) = \frac{1}{Z_c} \exp \left( - \frac{1}{2}\langle L^\alpha u,u \rangle - \phi(u;y) - \psi_c(u) \right) d u,   \]
and define $\lambda_2$ and $\lambda_{2,c}$ as in Remark \ref{Eigangap} using $\mu^y$ and $\mu_c$ instead of $\mu$. For any given $f\in L^2(\mu)$ we then have
\[ \frac{\int | \nabla f|^2 d \mu }{\int |f-f_\mu|^2 d\mu}  \geq \frac{\int | \nabla f|^2 d \mu }{\int |f-f_{\mu_c}|^2 d\mu} \geq \exp\left(-\frac{k}{\veps} \right) \frac{\int | \nabla f|^2 d \mu_c }{\int |f-f_{\mu_c}|^2 d\mu_c}, \]
where the first inequality follows from the fact that $f_\mu = \argmin_{a \in \R} \int| f - a|^2 d \mu $ and the second inequality follows directly from the fact that $  0 \geq \psi_c - \psi \geq -\frac{k}{\veps} $ . It follows that
\begin{align*}
\lambda_2 &\ge \exp\left(-\frac{k}{\veps}\right) \, \lambda_{2,c} \geq  \exp\left(-\frac{k}{\veps}\right) (\Lambda_{\min}(L))^\alpha .
\end{align*}
where the last inequality follows from the fact that the negative log-likelihood of $\mu^y$ satisfies the Bakry-Emery condition with constant $\Lambda_{\min}(L)$ (see Chapter 4.5 in \cite{pavliotis2014stochastic}). Clearly, the Poincar\'e constant above is very large for small $\epsilon$ or for large $k$ (number of labeled data points).  We also notice that the cost of discretization of the diffusion associated to this flow increases with $n$ (as in Section \ref{sssec:probit}).

\begin{remark}
A similar analysis can be carried out now using the constant metric  
\[G:=L^\alpha.\]
More precisely, consider the flow of the Dirichlet energy
\[D^\mu_g(f):=\int \lvert \nabla_g f \rvert_g^2 d \mu(u) \]
with respect to $L^2(\mu)$. How convex is this functional? For every $f \in U$ we have
\[ \frac{\int | \nabla_g f|_g^2 d \mu }{\int |f-f_\mu|^2 d\mu}  \geq \frac{\int | \nabla_g f|_g^2 d \mu }{\int |f-f_{\mu_c}|^2 d\mu} \geq \exp\left(-\frac{k}{\veps} \right) \frac{\int | \nabla_g f|_g^2 d \mu_c }{\int |f-f_{\mu_c}|^2 d\mu_c}, \]
from where it follows that
\[ \lambda_2^g  \geq \exp\left( - \frac{k}{\veps} \right). \]
A similar remark to the one at the end of section \ref{sssec:probit} regarding the dependence in $L$ of the resulting diffusion applies here as well.
\end{remark}

\section{Conclusions and Future Work}\label{sec:conclusions}
The main contribution of this paper is to explore three variational formulations of the Bayesian update and their associated gradient flows. We have shown that, for each of the three variational formulations, the geodesic convexity of the objective functionals gives a bound on the rate of convergence of the flows to the posterior. As an application of the theory, we have suggested a criterion for the optimal choice of metric in Riemannian MCMC schemes.
We summarize below some additional outcomes and directions for further work.
\begin{itemize}
\item We bring attention to different variational formulations of the Bayesian update. These formulations have the potential of extending the theory of Bayesian inverse problems in function spaces, in particular in cases with infinite dimensional, non-additive, and non-Gaussian observation noise. Moreover, they suggest numerical approximations to the posterior by restricting the space of allowed measures in the minimization, by discretization of the associated gradient flows, or by sampling via simulation of the associated diffusion.
\item The variational framework considered in this paper provides a natural setting for the study of robustness of Bayesian models, and for the analysis of convergence of discrete to continuum Bayesian models. Indeed, the authors \cite{trillos2017continuum}, \cite{trillos2017consistency} have recently established  the consistency of Bayesian semi-supervised learning in the regime with fixed number of labeled data points and growing number of unlabeled data. The analysis relies on the variational formulation based on Kullback-Leibler prior penalization in equation \eqref{energyminimizationforlogit}.
\item The results in the paper give new understanding of the ubiquity of Kullback-Leibler penalizations in sampling methodology. In practice Kullback-Leibler is often used for computational and analytical tractability. The results presented in section \ref{ssec:PDEdiffusion} show that Kullback-Leibler prior penalization leads to a heat-type flow and, therefore, to an easily discretized diffusion process. On the other hand,  $\chi^2$ prior penalization leads to a nonlinear diffusion process.  
\end{itemize}

\subsection*{Acknowledgments}
We are thankful to Mat\'ias Delgadino for pointing to us the reference \cite{japoneses2011displacement} while participating in the CNA Ki-net workshop ``Dynamics and Geometry from High Dimensional Data'' that took place at Carnegie Mellon University in March 2017. We are also thankful to Sayan Mukherjee for the reference \cite{duke}. Finally, we thank the anonymous editor and referees for their immense help in improving the readability of our manuscript.

  \bibliographystyle{ba}
\bibliography{isbib}

\begin{thebibliography}{32}
\newcommand{\enquote}[1]{``#1''}
\expandafter\ifx\csname natexlab\endcsname\relax\def\natexlab#1{#1}\fi
\expandafter\ifx\csname url\endcsname\relax
  \def\url#1{{\tt #1}}\fi
\expandafter\ifx\csname urlprefix\endcsname\relax\def\urlprefix{URL }\fi
\ifx\endbibitem\undefined \let\endbibitem\relax\fi

\bibitem[{Ambrosio et~al.(2008)Ambrosio, Gigli, and
  Savar{\'e}}]{ambrosio2008gradient}
Ambrosio, L., Gigli, N., and Savar{\'e}, G. (2008).
\newblock {\em Gradient flows: in metric spaces and in the space of probability
  measures\/}.
\newblock Springer Science \& Business Media.
\endbibitem

\bibitem[{Attias(1999)}]{attias1999variational}
Attias, H. (1999).
\newblock \enquote{{A Variational Bayesian Framework for Graphical Models}.}
\newblock In {\em NIPS\/}, volume~12.
\endbibitem

\bibitem[{Bertozzi et~al.(2017)Bertozzi, Luo, Stuart, and
  Zygalakis}]{bertozziuncertainty}
Bertozzi, A.~L., Luo, X., Stuart, A.~M., and Zygalakis, K.~C. (2017).
\newblock \enquote{Uncertainty quantification in the classification of high
  dimensional data.}
\endbibitem

\bibitem[{Besag(1994)}]{besag1994comments}
Besag, J.~E. (1994).
\newblock \enquote{Comments on “Representations of knowledge in complex
  systems” by U. Grenander and M. I. Miller.}
\newblock {\em J. Roy. Statist. Soc. Ser. B\/}, 56: 591--592.
\endbibitem

\bibitem[{Burago et~al.(2001)Burago, Burago, and Ivanov}]{burago}
Burago, D., Burago, Y., and Ivanov, S. (2001).
\newblock {\em A course in metric geometry\/}, volume~33 of {\em Graduate
  Studies in Mathematics\/}.
\newblock American Mathematical Society, Providence, RI.
\newline\urlprefix\url{https://doi.org/10.1090/gsm/033}
\endbibitem

\bibitem[{Burago et~al.(2013)Burago, Ivanov, and Kurylev}]{burago2013graph}
Burago, D., Ivanov, S., and Kurylev, Y. (2013).
\newblock \enquote{{A graph discretization of the Laplace-Beltrami operator}.}
\newblock {\em arXiv preprint arXiv:1301.2222\/}.
\endbibitem

\bibitem[{Cotter et~al.(2013)Cotter, Roberts, Stuart, and
  White}]{cotter2013mcmc}
Cotter, S.~L., Roberts, G.~O., Stuart, A.~M., and White, D. (2013).
\newblock \enquote{{MCMC methods for functions: modifying old algorithms to
  make them faster}.}
\newblock {\em Statistical Science\/}, 28(3): 424--446.
\endbibitem

\bibitem[{do~Carmo~Valero(1992)}]{docarmo1992riemannian}
do~Carmo~Valero, M.~P. (1992).
\newblock {\em {Riemannian Geometry}\/}.
\endbibitem

\bibitem[{Dupuis and Ellis(2011)}]{dupuis2011weak}
Dupuis, P. and Ellis, R.~S. (2011).
\newblock {\em A weak convergence approach to the theory of large
  deviations\/}, volume 902.
\newblock John Wiley \& Sons.
\endbibitem

\bibitem[{Fox and Roberts(2012)}]{fox2012tutorial}
Fox, C.~W. and Roberts, S.~J. (2012).
\newblock \enquote{{A tutorial on variational Bayesian inference}.}
\newblock {\em Artificial intelligence review\/}, 38(2): 85--95.
\endbibitem

\bibitem[{Garcia~Trillos et~al.(2017{\natexlab{a}})Garcia~Trillos, Gerlach,
  Hein, and Slepcev}]{TrillosBreakThrough}
Garcia~Trillos, N.~., Gerlach, M.~., Hein, M., and Slepcev, D.
  (2017{\natexlab{a}}).
\newblock \enquote{{Spectral convergence of empirical graph Laplacians}.}
\newblock {\em In preparation\/}.
\endbibitem

\bibitem[{Garcia~Trillos et~al.(2017{\natexlab{b}})Garcia~Trillos, Kaplan,
  Samakhoana, and Sanz-Alonso}]{trillos2017consistency}
Garcia~Trillos, N., Kaplan, Z., Samakhoana, T., and Sanz-Alonso, D.
  (2017{\natexlab{b}}).
\newblock \enquote{On the consistency of graph-based Bayesian learning and the
  scalability of sampling algorithms.}
\newblock {\em arXiv preprint arXiv:1710.07702\/}.
\endbibitem

\bibitem[{Garcia~Trillos and Sanz-Alonso(2018)}]{trillos2017continuum}
Garcia~Trillos, N. and Sanz-Alonso, D. (2018).
\newblock \enquote{Continuum limits of posteriors in graph Bayesian inverse
  problems.}
\newblock {\em SIAM Journal on Mathematical Analysis\/}, 50(4): 4020--4040.
\endbibitem

\bibitem[{Girolami and Calderhead(2011)}]{girolami2011riemann}
Girolami, M. and Calderhead, B. (2011).
\newblock \enquote{Riemann manifold Langevin and hamiltonian Monte Carlo
  methods.}
\newblock {\em Journal of the Royal Statistical Society: Series B (Statistical
  Methodology)\/}, 73(2): 123--214.
\endbibitem

\bibitem[{Guo et~al.(2016)Guo, Wang, Fan, Broderick, and
  Dunson}]{guo2016boosting}
Guo, F., Wang, X., Fan, K., Broderick, T., and Dunson, D.~B. (2016).
\newblock \enquote{Boosting variational inference.}
\newblock {\em arXiv preprint arXiv:1611.05559\/}.
\endbibitem

\bibitem[{Jordan and Kinderlehrer(1996)}]{jordan199618}
Jordan, R. and Kinderlehrer, D. (1996).
\newblock \enquote{An extended variational principle.}
\newblock {\em Partial differential equations and applications: collected
  papers in honor of Carlo Pucci\/}, 187.
\endbibitem

\bibitem[{Jordan et~al.(1998)Jordan, Kinderlehrer, and
  Otto}]{jordan1998variational}
Jordan, R., Kinderlehrer, D., and Otto, F. (1998).
\newblock \enquote{The variational formulation of the Fokker--Planck equation.}
\newblock {\em SIAM journal on mathematical analysis\/}, 29(1): 1--17.
\endbibitem

\bibitem[{Kipnis and Varadhan(1986)}]{kipnis1986central}
Kipnis, C. and Varadhan, S. R.~S. (1986).
\newblock \enquote{{Central limit theorem for additive functionals of
  reversible Markov processes and applications to simple exclusions}.}
\newblock {\em Communications in Mathematical Physics\/}, 104(1): 1--19.
\endbibitem

\bibitem[{McCann(1997)}]{mccann1997convexity}
McCann, R.~J. (1997).
\newblock \enquote{A convexity principle for interacting gases.}
\newblock {\em Advances in mathematics\/}, 128(1): 153--179.
\endbibitem

\bibitem[{Ohta and Takatsu(2011)}]{japoneses2011displacement}
Ohta, S. and Takatsu, A. (2011).
\newblock \enquote{Displacement convexity of generalized relative entropies.}
\newblock {\em Advances in Mathematics\/}, 228(3): 1742--1787.
\endbibitem

\bibitem[{Pavliotis(2014)}]{pavliotis2014stochastic}
Pavliotis, G.~A. (2014).
\newblock \enquote{Stochastic Processes and Applications.}
\newblock {\em Texts in Applied Mathematics. Springer, Berlin\/}.
\endbibitem

\bibitem[{Pinski et~al.(2015)Pinski, Simpson, Stuart, and
  Weber}]{pinski2015kullback}
Pinski, F.~J., Simpson, G., Stuart, A.~M., and Weber, H. (2015).
\newblock \enquote{{Kullback--Leibler approximation for probability measures on
  infinite dimensional spaces}.}
\newblock {\em SIAM Journal on Mathematical Analysis\/}, 47(6): 4091--4122.
\endbibitem

\bibitem[{Roberts and Tweedie(1996)}]{roberts1996exponential}
Roberts, G.~O. and Tweedie, R.~L. (1996).
\newblock \enquote{{Exponential convergence of Langevin distributions and their
  discrete approximations}.}
\newblock {\em Bernoulli\/}, 341--363.
\endbibitem

\bibitem[{Santambrogio(2015)}]{santambrogio2015optimal}
Santambrogio, F. (2015).
\newblock \enquote{Optimal transport for applied mathematicians.}
\newblock {\em Birk{\"a}user, NY\/}.
\endbibitem

\bibitem[{Sturm(2006)}]{sturm2006geometry}
Sturm, K.~T. (2006).
\newblock \enquote{On the geometry of metric measure spaces.}
\newblock {\em Acta mathematica\/}, 196(1): 65--131.
\endbibitem

\bibitem[{Teschl(2012)}]{teschl2012ordinary}
Teschl, G. (2012).
\newblock {\em Ordinary differential equations and dynamical systems\/}, volume
  140.
\newblock American Mathematical Soc.
\endbibitem

\bibitem[{Villani(2003)}]{villani2003topics}
Villani, C. (2003).
\newblock {\em Topics in optimal transportation\/}.
\newblock 58. American Mathematical Soc.
\endbibitem

\bibitem[{Villani(2008)}]{villani2008optimal}
--- (2008).
\newblock {\em Optimal transport: old and new\/}, volume 338.
\newblock Springer Science \& Business Media.
\endbibitem

\bibitem[{Von~Luxburg(2007)}]{von2007tutorial}
Von~Luxburg, U. (2007).
\newblock \enquote{A tutorial on spectral clustering.}
\newblock {\em Statistics and computing\/}, 17(4): 395--416.
\endbibitem

\bibitem[{von Renesse and Sturm(2005)}]{von2005transport}
von Renesse, M.~K. and Sturm, K.~T. (2005).
\newblock \enquote{{Transport inequalities, gradient estimates, entropy and
  Ricci curvature}.}
\newblock {\em Communications on pure and applied mathematics\/}, 58(7):
  923--940.
\endbibitem

\bibitem[{Wainwright and Jordan(2008)}]{wainwright2008graphical}
Wainwright, M.~J. and Jordan, M.~I. (2008).
\newblock \enquote{{Graphical models, exponential families, and variational
  inference}.}
\newblock {\em Foundations and Trends{\textregistered} in Machine Learning\/},
  1(1--2): 1--305.
\endbibitem

\bibitem[{Zellner(1988)}]{duke}
Zellner, A. (1988).
\newblock \enquote{{Optimal information processing and Bayes's theorem}.}
\newblock {\em The American Statistician\/}, 42(4): 278--280.
\endbibitem

\end{thebibliography}

\appendix
\section{Proof of Theorem \ref{theoremrmcmc}} 

Notice that the measure $\mu$ can be rewritten as
\[ d \mu(x) \propto \exp \biggl(- F(x) -  \log\Bigl( \sqrt{\det(G(x))} \,\, \Bigr)  \biggr) d vol_g(x) =\exp(-F_g(x) ) d vol_g(x), \]
where
\[F_g(x):= F(x) + \log\Bigl(\sqrt{\det (G)}\Bigr), \quad x \in \R^m. \]
From Proposition \ref{propositionkl}  the sharp constant $\lambda$ for which $\dkl(\cdot \| \mu)$ is $\lambda$-geodesically convex in the $g$-Wasserstein space is given by 
\[ \inf_{x \in \R^m} \min_{v: g(v,v) =1 } \text{Ric}_g(v,v) + \text{Hess}_g F_g(v,v),  \]
where $\text{Ric}_g$ and $\text{Hess}_g$ stand for the Ricci curvature and Hessian in the $g$-metric. To establish the proposition, it suffices to show that for any given $x \in \R^m$,  $\min_{v: g(v,v) =1 } \text{Ric}_g(v,v) + \text{Hess}_g F_g$ is equal to the smallest eigenvalue of 
the matrix $B+ \text{Hess}\, F- C$.

Let us start by recalling that the $g$-Hessian of a $C^2$ function  $I $, denoted by $\text{Hess}_g I$, is the symmetric $(2,0)$-tensor satisfying
\[ \text{Hess}_g I ( v , v  ) = \frac{d^2}{dt^2} I\bigl(\gamma(t)\bigr)\big|_{t=0},   \]
for every $v \in \R^m$ and every constant speed $g$-geodesic curve with $\gamma(0)=x$ and $\dot{\gamma}(0)=v$.
It is convenient to rewrite $ \text{Hess}_g I  (v , v  )$ in terms of the Christoffel symbols of the metric $g$, the Euclidean inner product, and the regular (Euclidean) gradient and Hessian of the function $I$. Let $\gamma: (-\veps, \veps ) \rightarrow \R^m$ be a constant speed $g$-geodesic with $\gamma(0)=x$, $\dot{\gamma}(0)=v$. We can then write:
\begin{align}
\begin{split}
\frac{d^2}{dt^2} I\bigl(\gamma(t)\bigr) &= \frac{d}{dt} \Bigl( \bigl\langle \nabla I (\gamma(t)) , \dot{\gamma}(t) \bigr\rangle \Bigr)
\\&= \Bigl\langle \text{Hess}\, I\bigl(\gamma(t)\bigr) \dot{\gamma}(t)   , \dot{\gamma}(t)  \Bigr\rangle + \Bigl\langle \nabla I (\gamma(t)) , \ddot{\gamma}(t) \Bigr\rangle,
\end{split}
\label{HessianComp}
\end{align}
where $\nabla I$ and $\text{Hess}\, I$ are the usual gradient and Hessian matrix of $I,$ respectively. The acceleration of the curve $\gamma$ can be written in terms of the Christoffel symbols.  Namely, if we write $\gamma$ in coordinates as
\[\gamma(t) = \bigl(\gamma_1(t), \dots, \gamma_m(t)\bigr), \quad t \in (-\veps, \veps),\]
the following system of second order ODEs holds:
\begin{equation*}
\ddot{\gamma}_l(t) = -\sum_{ij}\Gamma_{ij}^l \dot{\gamma}_i(t)\dot{\gamma}_j(t), \quad l=1, \dots, m.
\end{equation*} 
Plugging the geodesic equations back into \eqref{HessianComp}, and setting $t=0$, it follows that
\[  \frac{d^2}{dt^2}  I \bigl(\gamma(t)\bigr) |_{t=0} =  \Bigl\langle \bigl(\text{Hess}\, I -A(I)\bigr) v  , v \Bigr\rangle, \]
where $A(I)$ is the matrix with coordinates
\[ A(I)_{ij}=  \Gamma_{ij}^l \frac{\partial I}{\partial x_l} . \]
Hence, 
\begin{equation}
  \text{Hess}_g \, I (v , v ) =  \Bigl\langle \bigl(\text{Hess} I  - A(I)\bigr) v  , v \Bigr\rangle, \quad \forall v \in \R^m .
  \label{Hessg} 
\end{equation}

Taking $I:= F_g $ we can write the coordinate $ij$ of the matrix $ \text{Hess}\, I  - A(I)$ as
\[   \frac{\partial^2F}{\partial x_i x_j}   +  \frac{\partial^2}{\partial x_i x_j}  \log\left(\sqrt{\det(G)}\right) - \Gamma_{ij}^l\frac{\partial F}{\partial x_l } - \Gamma_{ij}^l \frac{\partial }{\partial x_l} \log\left(\sqrt{\det(G)}\, \right).   \] 
Using now the fact that the $g$-divergence of the vector field $\frac{\partial }{\partial x_l} $ can be written in terms of the Christoffel symbols as
\[  \frac{\partial}{\partial x_l} \log\Bigl(\sqrt{\det(G)}\Bigr) = \Gamma_{l k}^k,  \]
we deduce that
\[\bigl(\text{Hess}\, I  - A(I)\bigr)_{ij} = \frac{\partial^2F}{\partial x_i  \partial  x_j}   +  \frac{\partial \Gamma_{il}^l}{ \partial x_j}   - \Gamma_{ij}^l\frac{\partial F}{\partial x_l } - \Gamma_{ij}^l \Gamma_{lk}^k . \]
On the other hand, the Ricci curvature $\text{Ric}_g (v)$ can be written in terms of the Christoffel symbols and its derivatives (alternatively in terms of the metric and its first and second order derivatives) as
\begin{equation}
\text{Ric}_g(v,v) = \langle R v  ,v \rangle, \quad v \in \R^m,
\label{Riccig}
\end{equation}   
where $R$ is the (symmetric) matrix with entries
\[ R_{ij} = \frac{\partial \Gamma_{ij}^l}{\partial x_l} - \frac{\partial \Gamma_{il}^l }{\partial x_j}  + \Gamma_{ij}^l \Gamma_{kl}^k - \Gamma_{il}^k \Gamma_{jk}^l, \]
see \citet{docarmo1992riemannian} for details. After some cancellations using the symmetry of the symbols,  we obtain that
\[ \bigl( R + \text{Hess}\, I - A(I) \bigr)_{ij} =  \frac{\partial  \Gamma_{ij}^l }{\partial x_l} - \Gamma_{il}^k \Gamma_{jk}^l + \frac{\partial ^2 F}{\partial x_i \partial x_j} - \Gamma_{ij}^l \frac{\partial F}{\partial x_l},   \]
and so
\[ R + \text{Hess}\, I - A(I) = B + \text{Hess}\, F - C.\]
Using \eqref{Hessg} and \eqref{Riccig} we deduce that
\[ \text{Ric}_g (v,v) + \text{Hess}_g F_g(v,v) =  \bigl\langle (B+ \text{Hess}\, F  -  C ) v , v \bigr\rangle, \quad \forall v \in \R^m.   \]
Therefore the variational problem (for every fixed $x \in \R^m$)
\[ \min_{v : g(v,v)=1} \text{Ric}_g (v,v) + \text{Hess}_g F(v,v)   \]
can be rewritten, applying the change of variables $w:= G^{1/2}v,$ as
\[  \min_{v : \langle w , w \rangle=1} \Bigl\langle   G^{-1/2}\bigl(B +  \text{Hess}\, F - C \bigr) G^{-1/2} w, w \Bigr\rangle.     \]
In turn this coincides with 
$$\Lambda_{\min}\Bigl( G^{-1/2}\bigl(B +  \text{Hess}\, F - C \bigr) G^{-1/2}  \Bigr),$$ i.e., the smallest eigenvalue of the matrix 
\[ G^{-1/2}\bigl(B +  \text{Hess}\, F - C \bigr) G^{-1/2}.\] 
This concludes the proof of the first part of the theorem. 

Now we show the scaling property \eqref{homogeneity}.  Let $\widetilde{G} = aG$. By definition
\begin{align*}
\lambda_{\widetilde{G}} = \inf_{x \in \R^m} \Lambda_{\min} \Bigl( \widetilde{G}^{-1/2} \bigl(\widetilde{B} + \text{Hess}\,F - \tilde{C} \bigr)\widetilde{G}^{-1/2}  \Bigl),
\end{align*}
where $\widetilde{B}$ and $\widetilde{C}$ are defined as in \eqref{def:B} and \eqref{def:C} but in terms of the metric $\tilde{G}.$ From the expression \eqref{Christoffelsymbols} for the Christoffel symbols, it follows that they are invariant under rescaling of the metric and, since $B, \, \widetilde{B}$ and $C, \,\widetilde{C}$ depend on the metric only through the symbols, we deduce that  $\widetilde{B} = B $, $\widetilde{C} = C.$
Therefore,
\begin{align*}
\lambda_{\widetilde{G}} &=\frac{1}{a} \inf_{x \in \R^m} \Lambda_{\min} \Bigl( {G}^{-1/2} \bigl(B + \text{Hess}\,(F) - C \bigr) {G}^{-1/2}  \Bigr) \\
&= \frac{1}{a} \lambda_G. 
\end{align*}
  \qed

\end{document}